\numberwithin{equation}{section}
\numberwithin{figure}{section}
\theoremstyle{plain}
\newtheorem{thm}{\protect\theoremname}[section]
\theoremstyle{plain}
\newtheorem{lem}{\protect\lemmaname}[section]
\theoremstyle{plain}
\newtheorem{prop}{\protect\propositionname}[section]
\theoremstyle{plain}
\newtheorem{cor}{\protect\corollaryname}[section]
\providecommand{\lemmaname}{Lemma}
\providecommand{\theoremname}{Theorem}
\providecommand{\corollaryname}{Corollary}
\providecommand{\lemmaname}{Lemma}
\providecommand{\propositionname}{Proposition}
\providecommand{\theoremname}{Theorem}
\begin{document}
\title[Weighted Partial Sums of a Random Multiplicative Function ]{Weighted partial sums of a random multiplicative function and their
positivity }
\author{Shuming Liu}
\address{School of Mathematics and Statistics, HNP-LAMA, Central South University
\\
 Changsha 410083, Hunan, People's Republic of China}
\email{232111040@csu.edu.cn}
\author{Bing He}
\address{School of Mathematics and Statistics, HNP-LAMA, Central South University
\\
 Changsha 410083, Hunan, People's Republic of China}
\email{yuhelingyun@foxmail.com; yuhe123456@foxmail.com }
\begin{abstract}
In this paper, we study the probability that some weighted partial
sums of a random multiplicative function $f$ are positive. Applying
the characteristic decomposition, we obtain that if $S$ is a non-empty
subset of the multiplicative residue class group $\ensuremath{(\mathbb{Z}/m\mathbb{Z})^{\times}}$
with $m$ being a fixed positive integer and $\ensuremath{A=\{a+mn\mid n=0,1,2,3,\cdots\}}$
with $a\in S,$ then there exists a positive number $\delta$ independent
of $\ensuremath{x},$ such that
\[
\mathbb{P}\left(\sum_{A\cap[1,x)}\frac{f(n)}{n}<0\right)>\delta
\]
unless the coefficients of the real characters in the expansion of
the characteristic function of $S$ according to the characters of
$(\mathbb{Z}/m\mathbb{Z})^{\times}$ are all non-negative, and the
coefficients of the complex characters are all zero, in which case
we have
\[
\mathbb{P}\left(\sum_{A\cap[1,x)}\frac{f(n)}{n}<0\right)=O\left(\exp\left(-\exp\left(\frac{\ln x}{C\ln_{2}x}\right)\right)\right)
\]
for a positive constant $C.$ This includes as a special case a result
of Angelo and Xu. We also extend the result to the cyclotomic field
$\ensuremath{K_{n}=\mathbb{Q}(\zeta_{n})}$ with $\ensuremath{\zeta_{n}=e^{2\pi i/n}}$
and study the probability that these generalized weighted sums are
positive.  In addition, we deal with the positivity problem of certain
partial sums related to the celebrated Ramanujan tau function $\tau(n)$
and the Ramanujan modular form $\Delta(q),$ and obtain an upper bound
for the probability that these partial sums are negative in a more
general situation.
\end{abstract}

\keywords{random multiplicative function; Ramanujan tau function; Ramanujan
modular form; cyclotomic field; congruence class}
\subjclass[2000]{11K65; 11A25; 11F11}
\thanks{The second author is the corresponding author.}

\maketitle

\section{Introduction}

In their recent paper \cite{key-1}, motivated by a Tur\'{a}n  conjecture
on the positivity of the weighted partial sums of the Liouville function
$\lambda$ and its connections with the Riemann hypothesis, Angelo
and Xu proved that the probability that the partial sum

\[
\sum_{n\leq x}\frac{f(n)}{n}
\]
is negative for a fixed large $x$ is at most $O\left(\exp\left(-\exp\left(\frac{\ln x}{C\ln_{2}x}\right)\right)\right)$,
where $f$ is a random completely multiplicative function and $C$
is a positive constant. The random completely multiplicative function
is defined to be a funcion $f$ such that $f(p)=\pm1$ with probabilities
$1/2$ independently at each prime, and it can be extended completely
multiplicatively to all narural numbers. They prove it approximating
the above partial sum by large Euler product, and using the Rankin
trick to estimate the small tails. This bound was later improved by
Kerr and Klurman \cite{key-2} using a different method. Another related
topic concerns the sign changes and the lower bound of the partial
sums 
\[
\ensuremath{\sum_{n\leq x}\frac{f(n)}{n^{\sigma}}},
\]
 where $\ensuremath{0\leq\sigma<1}.$ Related work can be found in
\cite{A,key-7,key-6}.

We now consider the partial sum 
\[
\sum_{\substack{n\equiv1(\bmod4)\\
n\leq x
}
}\frac{f(n)}{n},
\]
 which is similar to the sum

\[
\sum_{n\leq x}\frac{f(n)}{n}.
\]
Using the characteristic decomposition and combining the results of
Angelo and Xu, the probability estimate

\[
\mathbb{P}\left(\sum_{\substack{n\equiv1(\bmod4)\\
n\leq x
}
}\frac{f(n)}{n}<0\right)=O\left(\exp\left(-\exp\left(\frac{\ln x}{C\ln_{2}x}\right)\right)\right)
\]
still holds similarly.

The first objective of this paper is to extend the above result to
more general congruence classes. Applying the characteristic decomposition,
we obtain the following conclusion, which includes as a special case
the result \cite[Theorem 1.2]{key-1} of Angelo and Xu.
\begin{thm}
\label{Theorem 1.2} For a fixed positive integer $\ensuremath{m},$
suppose $S$ is a non-empty subset of the multiplicative residue class
group $\ensuremath{(\mathbb{Z}/m\mathbb{Z})^{\times}}.$ Define $\ensuremath{A=\{a+mn\mid n=0,1,2,3,\cdots\}}$
with $a\in S.$ Then there exists a positive number $\delta$ independent
of $\ensuremath{x},$ such that

\[
\mathbb{P}\left(\sum_{A\cap[1,x)}\frac{f(n)}{n}<0\right)>\delta
\]
unless the coefficients of the real characters in the expansion of
the characteristic function of $S$ according to the characters of
$(\mathbb{Z}/m\mathbb{Z})^{\times}$ are all non-negative, and the
coefficients of the complex characters are all zero, in which case
we have

\begin{equation}
\mathbb{P}\left(\sum_{A\cap[1,x)}\frac{f(n)}{n}<0\right)=O\left(\exp\left(-\exp\left(\frac{\ln x}{C\ln_{2}x}\right)\right)\right),\label{eq:1-1}
\end{equation}
where $C$ is a positive constant.
\end{thm}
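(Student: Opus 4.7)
The strategy is orthogonality of Dirichlet characters modulo $m$: expand the indicator $\mathbf{1}_S$ on $(\mathbb{Z}/m\mathbb{Z})^\times$ as
\[
\mathbf{1}_S(b)=\sum_{\chi\bmod m}c_\chi\chi(b),\qquad c_\chi=\frac{1}{\varphi(m)}\sum_{a\in S}\overline{\chi(a)}.
\]
Substituting gives
\[
\Sigma(x):=\sum_{n\in A\cap[1,x)}\frac{f(n)}{n}=\sum_\chi c_\chi T_\chi(x),\qquad T_\chi(x):=\sum_{\substack{n<x\\(n,m)=1}}\frac{\chi(n)f(n)}{n}.
\]
Because $f$ is real-valued, $\overline{T_\chi(x)}=T_{\bar\chi}(x)$ and $c_{\bar\chi}=\overline{c_\chi}$, so complex characters pair with their conjugates into real contributions $2\operatorname{Re}(c_\chi T_\chi(x))$.

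\smallskip\noindent For each \emph{real} character $\chi$, the product $\chi(p)f(p)\in\{\pm 1\}$ is Rademacher and independent across primes $p\nmid m$, so $\chi f$ is itself a random completely multiplicative function on the integers coprime to $m$. Hence $T_\chi(x)$ has the same law as the coprime-to-$m$ version of the Angelo--Xu sum $\sum_{n<x,\,(n,m)=1}f(n)/n$, and their proof (large Euler product up to $y=\exp(\ln x/(C'\ln_2 x))$, plus Rankin's trick for the tail) adapts after removing the finitely many Euler factors at primes dividing $m$, giving $\mathbb{P}(T_\chi(x)<0)=O(\exp(-\exp(\ln x/(C\ln_2 x))))$. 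In the \emph{good} case, every real $c_\chi\ge 0$ and every complex $c_\chi=0$, so $\Sigma(x)=\sum_{\chi\text{ real}}c_\chi T_\chi(x)$ with $c_{\chi_0}=|S|/\varphi(m)>0$; if every real $T_\chi(x)$ with $c_\chi>0$ is positive then $\Sigma(x)>0$, and a union bound over the finitely many such $\chi$ turns the single-character estimate into \eqref{eq:1-1}.

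\smallskip\noindent In the \emph{bad} case, either some real $c_\chi$ is negative, or some complex $c_\chi$ is nonzero. The plan is to pass to the almost-sure limit $T_\chi(\infty)=\prod_{p\nmid m}(1-\chi(p)f(p)/p)^{-1}$ (secured by the uniform bound $\mathbb{E}|T_\chi(x)|^2=O(1)$) and exhibit a positive-probability event on which $\Sigma_\infty<0$. For $c_\chi<0$ real, conditioning on $f(p)=-\chi(p)$ for primes $p\le P$ with $\chi(p)=-1$ drives the ratio $T_\chi(\infty)/T_{\chi_0}(\infty)$ past the threshold $c_{\chi_0}/|c_\chi|$, because $\prod_{p\le P,\,\chi(p)=-1}(1+1/p)/(1-1/p)\to\infty$ as $P\to\infty$ by $\sum_{\chi(p)=-1}1/p=\infty$. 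For complex $c_\chi\ne 0$, the random argument of $T_\chi(\infty)$ has a non-degenerate continuous distribution, so conditioning on $f(p)$ for $p\le P$ can steer $\arg T_\chi(\infty)$ into the half-plane where $\operatorname{Re}(c_\chi T_\chi(\infty))$ is large and negative. In either scenario, the conditioning event has probability independent of $x$, and $L^2$-convergence transfers the sign of $\Sigma_\infty$ to $\Sigma(x)$ for all sufficiently large $x$.

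\smallskip\noindent The main obstacle is quantifying this bad-case heuristic into a uniform $\delta>0$: we need a precise full-support statement for the joint distribution of the truncated Euler products $\prod_{p\le P}(1-\chi(p)f(p)/p)^{-1}$ simultaneously across all characters with $c_\chi\ne 0$, together with a Chebyshev-type control on the tail $p>P$. Interlocking these two ingredients so that a single $\delta$ works for all $x$ is the essential technical step.
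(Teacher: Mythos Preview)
Your treatment of the \emph{good} case (all real $c_\chi\ge 0$, all complex $c_\chi=0$) matches the paper: each real $\chi$ turns $\chi f$ into a Rademacher completely multiplicative function on integers coprime to $m$, the Angelo--Xu Euler-product/Rankin argument applies verbatim after discarding the finitely many primes dividing $m$, and a union bound over the relevant characters gives \eqref{eq:1-1}.

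For the \emph{bad} case you correctly name the obstacle but do not resolve it, and the heuristic you sketch attacks the wrong variables. Conditioning on $f(p)$ at a finite set of primes affects \emph{all} the $T_\chi$ simultaneously, since the $T_\chi$ share the same underlying $f(p)$'s and are not independent; your single-character manoeuvres (push $T_\chi/T_{\chi_0}$ past a threshold, or steer $\arg T_\chi$) give no control over the remaining summands $c_{\chi'}T_{\chi'}(x)$, and a large negative contribution from one character can be cancelled by another. (Incidentally there is a sign slip: with $\chi(p)=-1$, setting $f(p)=-\chi(p)=+1$ gives $\chi(p)f(p)=-1$, so the $\chi$-Euler factor is $(1+1/p)^{-1}$, which is \emph{smaller}, not larger.)

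The paper's resolution is a change of coordinates from characters to residue classes. Instead of the correlated family $(T_\chi)_\chi$, it works with the \emph{independent} random variables
\[
\gamma_a(x)=\sum_{\substack{p\equiv a\ (\mathrm{mod}\ m)\\ p\le x}}\frac{f(p)}{p},\qquad a\in(\mathbb{Z}/m\mathbb{Z})^\times,
\]
which involve pairwise disjoint sets of primes. The Euler-product main term is, up to an $o(1)$ error, a smooth function $F(\gamma_1,\ldots,\gamma_{\varphi(m)})$ via $\log T_\chi\approx\sum_a\chi(a)\gamma_a+C_\chi$. Because the character matrix $(\chi(a))_{\chi,a}$ is nonsingular, the map $(\gamma_a)\mapsto(\sum_a\chi(a)\gamma_a)_\chi$ is an invertible linear change of variables; Proposition~\ref{Proposition 5.2} then shows directly that $F$ takes arbitrarily negative values precisely when the character-coefficient condition on $S$ fails. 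The full-support input you ask for is Proposition~\ref{Proposition 5.1}, proved for each $\gamma_a$ \emph{separately} (using $\sum_{p\equiv a}1/p=\infty$ and Hoeffding for the tail), and independence of the $\gamma_a$ turns the individual positive probabilities into a product bound $\prod_a\delta_a>0$ uniform in $x$. The tail $U_2$ is then dispatched by the same $2k$-th moment Rankin estimate as in the good case. This reparametrization by residue class rather than by character is the idea your proposal is missing.
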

When $a=m=1,$ the probability asymptotic \eqref{eq:1-1} of Theorem
\ref{Theorem 1.2} reduces to the result \cite[Theorem 1.2]{key-1}
of Angelo and Xu.

We also extend the result to the $n$-th cyclotomic field $\ensuremath{K_{n}:=\mathbb{Q}(\zeta_{n})},$
where $\ensuremath{\zeta_{n}:=e^{2\pi i/n}}.$ Let $f$ be a function
defined on the ring of integral ideals of $K_{n}.$ At each prime
ideal, $f$ takes values from the set $\ensuremath{\{1,-1\}}$ independently
and with equal probability. For any integral ideal $\mathfrak{a}$
with a prime ideal factorization $\ensuremath{\mathfrak{a}=\mathfrak{p}_{1}^{e_{1}}\cdots\mathfrak{p}_{s}^{e_{s}}},$
the value of $f$ on $\mathfrak{a}$ satisfies $\ensuremath{f(\mathfrak{a})=f(\mathfrak{p}_{1})^{e_{1}}\cdots f(\mathfrak{p}_{s})^{e_{s}}}.$
This function $f$ is an extension of a random completely multiplicative
function defined on the set of positive integers to the ring of integral
ideals of $K_{n}.$ We now consider the partial sums

\[
S_{x,K_{n}}:=\sum_{N(\mathfrak{a})\leq x}\frac{f(\mathfrak{a})}{N(\mathfrak{a})}
\]
with $N(\mathfrak{a})$ being the norm of the integral ideal $\mathfrak{a},$
and investigate the probability that $S_{x,K_{n}}<0.$ By combining
the prime ideal decomposition theorem in $K_{n}$ and the Brun-Titchmarsh
inequality, we proved that when the degree $n$ of the cyclotomic
field is not excessively large compared to $\ensuremath{x},$ similar
probability estimates remain valid. More precisely, we obtain the
following result.
\begin{thm}
\label{Theorem 1.1}Assume that $K_{n}=\mathbb{Q}(\zeta_{n})$ is
the $n$-th cyclotomic field, and $n<(\log x)^{A}$, where $A$ is
a positive number. Then

\[
\mathbb{P}(S_{x,K_{n}}<0)=O\left(\exp\left(-\exp\left(\frac{\ln x}{C(A)\ln_{2}x}\right)\right)\right)
\]
where $C(A)$ is a positive constant depending on $A$.
\end{thm}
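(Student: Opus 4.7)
The plan is to adapt the Euler-product approximation method of Angelo and Xu to the random multiplicative function $f$ on the integral ideals of $K_{n}$. Choose the truncation
$$y=\exp\!\Bigl(\frac{\ln x}{C(A)\,\ln_{2}x}\Bigr)$$
for a constant $C(A)$ to be determined, and form the truncated random Euler product
$$E_{y}:=\prod_{N(\mathfrak{p})\leq y}\Bigl(1-\frac{f(\mathfrak{p})}{N(\mathfrak{p})}\Bigr)^{-1}.$$
Each factor lies in $[N(\mathfrak{p})/(N(\mathfrak{p})+1),\,N(\mathfrak{p})/(N(\mathfrak{p})-1)]$, so $E_{y}>0$ for every realization of $f$. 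Consequently, for any $\eta>0$,
$$\mathbb{P}(S_{x,K_{n}}<0)\leq\mathbb{P}(E_{y}\leq\eta)+\mathbb{P}(|S_{x,K_{n}}-E_{y}|\geq\eta),$$
and it suffices to bound each term by the target doubly-exponential rate.

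The approximation error is handled in direct analogy with Angelo and Xu. Expanding $E_{y}$ as a sum over $y$-smooth integral ideals, one obtains
$$S_{x,K_{n}}-E_{y}=\sum_{\substack{N(\mathfrak{a})\leq x\\\mathfrak{a}\text{ not }y\text{-smooth}}}\frac{f(\mathfrak{a})}{N(\mathfrak{a})}\;-\;\sum_{\substack{N(\mathfrak{a})>x\\\mathfrak{a}\text{ is }y\text{-smooth}}}\frac{f(\mathfrak{a})}{N(\mathfrak{a})}.$$
Using the orthogonality relation $\mathbb{E}[f(\mathfrak{a})f(\mathfrak{b})]=\mathbf{1}[\mathfrak{a}\mathfrak{b}\text{ is a square of an integral ideal}]$, the second moment collapses to a deterministic double sum, to which I would apply Rankin's trick with shift $\sigma=1-c/\ln y$: the ``bad'' conditions are traded for the exponential saving $\exp(-c\ln x/\ln y)=\exp(-cC(A)\ln_{2}x)$ at the price of an Euler product over prime ideals of norm $\leq y$. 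A Markov/Chebyshev inequality, applied at a suitably high moment as in Angelo--Xu, then converts this $L^{2}$ bound into a super-exponentially small probability. For the event $\{E_{y}\leq\eta\}$, the independence of $f(\mathfrak{p})$ across distinct prime ideals lets $\ln E_{y}$ be treated as a sum of independent bounded random variables; a Chernoff bound on $\mathbb{E}[E_{y}^{-\lambda}]$ delivers an analogous super-exponential bound provided $\eta$ is a small enough constant.

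The main obstacle, and the place where the hypothesis $n<(\ln x)^{A}$ enters, is the uniform-in-$n$ estimation of the cyclotomic Mertens sums and Rankin Euler products that arise in the above steps. By the prime-ideal decomposition theorem, each rational prime $p\nmid n$ produces $\varphi(n)/f_{p}$ prime ideals of norm $p^{f_{p}}$, where $f_{p}$ is the multiplicative order of $p$ modulo $n$. Regrouping sums such as $\sum_{N(\mathfrak{p})\leq y}N(\mathfrak{p})^{-\sigma}$ by the residue class of $p$ modulo $n$, the Brun--Titchmarsh inequality
$$\pi(y;n,a)\leq\frac{2y}{\varphi(n)\,\ln(y/n)}\qquad(n<y)$$
together with partial summation bounds the dominant contribution from the completely split primes ($f_{p}=1$) by $O_{A}(\ln_{2}y)$; the higher-inertia contributions ($f_{p}\geq 2$) are handled by trivial bounds on $\sum_{p}p^{-f\sigma}$. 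The assumption $n<(\ln x)^{A}$ is used precisely to keep $n<y$ throughout the relevant range of $y$, so that Brun--Titchmarsh applies with constants depending only on $A$. Inserting these uniform estimates into the two probabilistic bounds above, and choosing $C(A)$ large enough to absorb the inherited polynomial losses, yields the claimed decay.
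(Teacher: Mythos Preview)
Your strategy---approximate $S_{x,K_n}$ by a random Euler product, bound the probability that the product is small via negative moments, bound the tail via high even moments and Rankin's trick, and control the resulting cyclotomic Euler products uniformly in $n$ using the prime-ideal decomposition law together with Brun--Titchmarsh---is exactly the paper's. The paper isolates the last step as a separate lemma (its Lemma~2.5), splitting the product over prime ideals into three pieces $Z_1,Z_2,Z_3$ (degree-one primes, higher-inertia primes, ramified primes) and bounding each by $\exp(C_j(A)\,k\ln_2 x)$, just as you outline.

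The one substantive difference is the truncation point, and here your write-up leaves a gap. The paper takes the Euler product all the way up to $x$, so that
\[
S_{x,K_n}\;=\;\prod_{N(\mathfrak{p})\le x}\Bigl(1-\tfrac{f(\mathfrak{p})}{N(\mathfrak{p})}\Bigr)^{-1}\;-\;\sum_{\substack{N(\mathfrak{a})>x\\ P(\mathfrak{a})\le x}}\frac{f(\mathfrak{a})}{N(\mathfrak{a})}\;=:\;Y_{x,K_n}-Z_{x,K_n}
\]
is an exact identity with a \emph{single} tail $Z_{x,K_n}$; one then bounds $\mathbb{P}(Y_{x,K_n}<\delta)$ and $\mathbb{P}(Z_{x,K_n}>\delta)$ by Markov on $Y^{-k}$ and $Z^{2k}$ with $k=x^{1/(D\ln_2 x)}$ and $\delta=(\ln_2 x/\ln x)^{2C(A)}$. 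Your truncation at $y=\exp(\ln x/(C(A)\ln_2 x))\ll x$ instead produces two error pieces, and you do not say how to treat the first one (non-$y$-smooth ideals of norm $\le x$): Rankin does not apply to it directly, since the ``bad'' condition is a lower bound on the largest prime factor, not on the norm. This can be repaired, but it is additional work that the paper's choice simply avoids. Relatedly, your sentence ``the second moment collapses \ldots\ Markov/Chebyshev at a suitably high moment then converts this $L^2$ bound'' is not right as written: an $L^2$ bound alone only yields polynomial tails; to get the doubly-exponential decay one must bound the $2k$-th moment \emph{directly} (this is what both the paper and Angelo--Xu do), not bootstrap from $L^2$.
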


The final objective of this paper is to handle the positivity problem
of certain partial sums related to the Ramanujan tau function give
by 
\[
\sum_{n=1}^{\infty}\tau(n)q^{n}:=q\prod_{k\geq1}(1-q^{k})^{24},\quad|q|<1.
\]
Suppose that $\Delta(q):=q\prod_{k\geq1}(1-q^{k})^{24}$ is the Ramanujan
modular form, and $L(\Delta,s)$ is the associated $L$-function.
The result of Mordell tells us that $\tau(n)$ is multiplicative and
satisfies the equation $\tau(p^{k+1})=\tau(p)\tau(p^{k})-p^{11}\tau(p^{k-1}),k\geq1.$
Deligne's extraordinary work shows that the equation $\ensuremath{\tau(p)=2p^{11/2}\cos\theta_{p}}$
holds for some $\ensuremath{\theta_{p}\in(0,\pi)}.$ The famous Sato-Tate
conjecture suggests that the values of $\theta_{p}$ follow the distribution
model:

\[
\mathbb{P}(\alpha<\theta_{p}<\beta)=\frac{2}{\pi}\int_{\alpha}^{\beta}\sin^{2}\theta d\theta.
\]
Related results can be found in \cite[Chapter 3]{key-4}.

It is natural to consider the following probability model. Suppose
$\varrho(n)$ is a multiplicative function, and it satisfies the recurrence
relation:

\[
\varrho(p^{k+1})=\varrho(p)\varrho(p^{k})-p^{11}\varrho(p^{k-1}),\quad k\geq1.
\]
Additionally, we assume that $\ensuremath{\varrho(p)=2p^{11/2}\cos\theta_{p}},$
where $\theta_{p}$ is a family of independent and identically distributed
random variables, taking values in $(0,\pi)$ and following distribution

\[
\mathbb{P}(\alpha<\theta_{p}<\beta)=\frac{2}{\pi}\int_{\alpha}^{\beta}\sin^{2}\theta d\theta.
\]

For the partial sum 
\[
\ensuremath{I_{x}:=\sum_{n\leq x}\frac{\varrho(n)}{n^{13/2}}},
\]
we are concerned with the upper bound of the probability $\ensuremath{\mathbb{P}(I_{x}<0)}.$
We extend this result to obtain an upper bound for this probability.
In fact, we arrive at a conclusion in a more general situation.
\begin{thm}
\label{Theorem 1.3} Suppose $\varrho(n)$ is a multiplicative function
satisfying the recurrence relation:

\[
\varrho(p^{k+1})=\varrho(p)\varrho(p^{k})-p^{m}\varrho(p^{k-1}),\quad k\geq1,
\]
where $m\geq0$ is a fixed integer. Additionally, we suppose that
$\ensuremath{\varrho(p)=2p^{m/2}\cos\theta_{p}},$ where $\theta_{p}\in(0,\pi)$
is a family of independent and identically distributed random variables
and satisfies

\[
\mathbb{P}(\alpha<\theta_{p}<\beta)=\frac{2}{\pi}\int_{\alpha}^{\beta}\sin^{2}\theta d\theta.
\]
Then we have
\[
\mathbb{P}(I_{x}^{(m)}<0)=O\left(\exp\left(-\exp\left(\frac{\ln x}{C\ln_{2}x}\right)\right)\right),
\]
where $C$ is a positive constant depending on $m$ and
\[
I_{x}^{(m)}:=\sum_{n\leq x}\frac{\varrho(n)}{n^{(m+2)/2}}.
\]
\end{thm}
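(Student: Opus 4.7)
The plan is to adapt the truncated Euler product method of Angelo and Xu \cite{key-1} to the present Hecke-type recurrence. Normalize by setting $a(n) := \varrho(n)/n^{m/2}$; dividing the recurrence by $p^{m(k+1)/2}$ yields $a(p^{k+1}) = a(p)a(p^k) - a(p^{k-1})$ with $a(p) = 2\cos\theta_p$, so that $a(p^k) = U_k(\cos\theta_p)$, the Chebyshev polynomial of the second kind, and in particular $|a(n)| \leq d(n)$ deterministically. By construction,
\[
I_x^{(m)} = \sum_{n \leq x}\frac{a(n)}{n},\qquad \sum_{n=1}^{\infty}\frac{a(n)}{n^s} = \prod_p \frac{1}{1 - a(p)/p^s + 1/p^{2s}} = \prod_p \frac{1}{|1 - e^{i\theta_p}/p^s|^2}.
\]
The crucial structural observation is that every local factor is \emph{deterministically positive}, so the truncated Euler product
\[
P_y := \prod_{p \leq y}\frac{1}{1 - a(p)/p + 1/p^2} \geq \prod_{p \leq y}\bigl(1+1/p\bigr)^{-2} \gg (\ln y)^{-2}
\]
enjoys a positive deterministic lower bound for every realization of $\{\theta_p\}_{p \leq y}$.

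Fix $y = x^{1/(C \ln_2 x)}$ with $C = C(m)$ large and split
\[
I_x^{(m)} - P_y = \sum_{\substack{n \leq x \\ P^+(n) > y}}\frac{a(n)}{n} \;-\; \sum_{\substack{n > x \\ P^+(n) \leq y}}\frac{a(n)}{n} \;=:\; E_1 - E_2.
\]
The deterministic tail $E_2$ I would control by Rankin's trick with $\sigma = 1/\ln y$ and $|a(n)| \leq d(n)$:
\[
|E_2| \;\leq\; x^{-\sigma}\prod_{p \leq y}\bigl(1 - p^{\sigma - 1}\bigr)^{-2} \;\ll\; (\ln y)^{O(1)}\exp\!\left(-\frac{\ln x}{\ln y}\right) \;\ll\; (\ln x)^{-A}
\]
for any prescribed $A$, by choosing $C$ large enough; in particular one can arrange $|E_2| \leq P_y/4$ unconditionally.

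For the random tail $E_1$, the independence of the $\theta_p$ together with the orthogonality $\int_0^\pi U_j(\cos\theta)U_k(\cos\theta)(2/\pi)\sin^2\theta\,d\theta = \delta_{j,k}$ of the Chebyshev polynomials under the Sato--Tate measure gives the identity $\mathbb{E}[a(m)a(n)] = \delta_{m,n}$, and hence $\mathbb{E}[E_1^2] \ll 1/(y\ln y)$. To upgrade this second-moment estimate to a doubly-exponential tail bound, I would expand $\mathbb{E}[E_1^{2q}]$ and evaluate each local factor $\mathbb{E}\bigl[\prod_i U_{k_i}(\cos\theta_p)\bigr]$ using the Clebsch--Gordan rule $U_j U_k = \sum_\ell U_{j+k-2\ell}$ as the non-negative integer multiplicity of the trivial $SU(2)$-representation in $V_{k_1}\otimes\cdots\otimes V_{k_r}$. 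This should yield a Khintchine-type inequality $\mathbb{E}[E_1^{2q}] \leq (Cq)^q (\mathbb{E}[E_1^2])^q$ analogous to the Rademacher estimate in Angelo--Xu, and optimizing $q$ gives
\[
\mathbb{P}\bigl(|E_1| > P_y/4\bigr) \;\leq\; \exp\!\left(-c\,\frac{P_y^2}{\mathbb{E}[E_1^2]}\right) \;\leq\; \exp\!\left(-\exp\!\left(\frac{\ln x}{C'\ln_2 x}\right)\right),
\]
after which $I_x^{(m)} \geq P_y - |E_1| - |E_2| > 0$ on the complementary event. The main obstacle is establishing this Khintchine-type inequality in the Sato--Tate setting: in the Rademacher case the local expectation is $0$ or $1$ according to the parity of $\sum_i v_p(n_i)$, whereas here it equals a possibly large multiplicity, and one must verify that these multiplicities, together with the divisor-function growth of $|a(n)|$ at the supported tuples, do not destroy the Gaussian-like $(Cq)^q$ growth of moments needed for subgaussian concentration.
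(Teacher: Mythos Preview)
Your deterministic lower bound $P_y \gg (\ln y)^{-2}$ is correct and in fact simplifies part of the paper's argument: the paper bounds $\mathbb{P}(I_{x,1} < \delta)$ for the full Euler product $I_{x,1} := P_x$ via Markov on negative moments, whereas your observation that each local factor $|1 - e^{i\theta_p}/p|^{-2} \geq (1+1/p)^{-2}$ already gives $I_{x,1} \gg (\ln x)^{-2}$ for every realization.

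The gap, however, is exactly where you place it, and it is genuine. The crude multiplicity bound $\bigl|\mathbb{E}\bigl[\prod_i U_{v_i}(\cos\theta)\bigr]\bigr| \leq \prod_i(v_i + 1) \leq 2^{\sum_i v_i}$, combined with the parity constraint, feeds into $\mathbb{E}[E_1^{2q}]$ to give at best a sum of the shape $\sum_{u > y^q,\, P^+(u)\leq x} d_{O(q)}(u^2)\,2^{O(\Omega(u))}/u^2$. Rankin's trick cannot make this small: the smoothness cutoff is $x$ while the size cutoff is only $y^q$ with $\ln y \asymp \ln x/\ln_2 x$, and no choice of the Rankin exponent $\sigma$ lets $y^{-q(2-\sigma)}$ beat the Euler product over $p \leq x$ without that product blowing up. So unless you supply a genuine hypercontractive or Khintchine inequality for products of Chebyshev polynomials under the Sato--Tate measure --- which is not standard and which you have not proved --- the control of $E_1$ does not close.

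The paper sidesteps this by taking $y = x$. Then $E_1 \equiv 0$, and the only error term is the smooth tail $I_{x,2} := \sum_{n > x,\, P^+(n)\leq x} a(n)/n$. Your deterministic Rankin bound for it degenerates, but the crude multiplicity bound \emph{is} enough in a moment computation: since each $n_i > x$ one has $u > x^k$, so
\[
\mathbb{E}\bigl[I_{x,2}^{2k}\bigr] \;\leq\; \sum_{\substack{u > x^k \\ P^+(u)\leq x}} \frac{d_{2k}(u^2)\,2^{O(\Omega(u))}}{u^2},
\]
where now the size and smoothness thresholds match, and Rankin with $2-\sigma \asymp \ln_2 x/\ln x$ gives $\exp(-c\, k\ln_2 x)$ for $k$ up to $x^{1/(D\ln_2 x)}$ (this is the paper's Lemma~2.4). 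Markov then yields the doubly-exponential tail. In short: truncating at $y < x$ trades a random tail that the crude $SU(2)$ bound handles for a random bulk $E_1$ that it does not; truncating at $y = x$ keeps only the former.
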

\subsection*{Notation.} We write $f\ll g$ or $f=O(g)$ if there
exists a positive constant $C$ such that $f\leq Cg.$ Similarly,
we write $f\ll_{A}g$ or $f=O_{A}(g)$ when the constant $C$ depends
on the parameter $A.$ 

\section{Auxiliary results}

In this section, we list some auxiliary results that we will use to
prove Theorems \ref{Theorem 1.1}, \ref{Theorem 1.2} and \ref{Theorem 1.3}.

We begin this section with an important result in \cite[Chapter 7, Theorem 7.3.1]{key-5}.
\begin{lem}
\label{Lemma 2.2} Suppose that $1\leq l\leq k<y\leq x,(k,l)=1.$
Then we have

\[
\pi(x;k,l)-\pi(x-y;k,l)<\frac{3y}{\varphi(k)\ln(y/k)},
\]
where $\varphi(k)$ denotes the Euler totient function and
\[
\pi(x;k,l):=\textrm{\ensuremath{\sharp}}\left\{ p\leq x:p\:is\:prime,p\equiv l(\bmod\:k)\right\} .
\]
In particular,

\[
\pi(x;k,l)<\frac{3x}{\varphi(k)\ln(x/k)}.
\]
\end{lem}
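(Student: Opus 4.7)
The plan is to apply Selberg's upper bound sieve to the interval $(x-y,x]$ restricted to the arithmetic progression $n\equiv l\pmod k$. Set $A:=\{n:x-y<n\le x,\; n\equiv l\pmod k\}$, so that $|A|=y/k+O(1)$, and let $\mathcal{P}$ denote the set of primes $p\nmid k$. Every prime $p$ counted by $\pi(x;k,l)-\pi(x-y;k,l)$ satisfies $(p,k)=1$ (since $(l,k)=1$), hence lies in $A$ and is coprime to all primes strictly smaller than itself. Taking a sifting parameter $z$ slightly below $\sqrt{y/k}$, say $z:=\sqrt{y/k}/(\log y)^{A}$ for a suitable $A>0$, the primes $p\le z$ contribute only a lower-order count, while the remaining primes survive the sieve at level $z$.

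Next I would attach the Selberg $\Lambda^{2}$ weights of level $z$. For a squarefree $d$ coprime to $k$, the count $\#\{n\in A:d\mid n\}$ equals $y/(kd)+O(1)$, while for $d$ sharing a factor with $k$ it vanishes. The sieve therefore yields
\[
\#\{p\in A:\, p>z\}\le \frac{y/k}{\mathcal{G}(z)}+E,
\]
where $\mathcal{G}(z):=\sum_{d\le z,\,(d,k)=1}\mu^{2}(d)/\varphi(d)$ and $E\ll z^{2}(\log z)^{2}$. The core analytic input is the classical lower bound $\mathcal{G}(z)\ge (\varphi(k)/k)\log z$, obtained from the textbook estimate $\sum_{d\le z}\mu^{2}(d)/\varphi(d)\ge \log z$ after factoring the Euler product at primes dividing $k$. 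Inserting $z=\sqrt{y/k}/(\log y)^{A}$ turns the main term into $(2+o(1))y/(\varphi(k)\log(y/k))$, while $E\ll y/(\log y)^{2A-2}$ is negligible once $A$ is chosen sufficiently large. The ``in particular'' clause follows by setting $y=x$.

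The main obstacle is making the error term genuinely negligible uniformly in the full range $k<y\le x$. For $y$ only slightly larger than $k$, even the trivial bound $\#\{p\in A\}\le |A|=y/k+O(1)$ already lies below $3y/(\varphi(k)\log(y/k))$, because the denominator $\log(y/k)$ is tiny and the right-hand side blows up; a direct check disposes of that boundary regime. For moderately large $y/k$, the Selberg sieve supplies the clean main term, and the slack between the sharp Montgomery--Vaughan constant $2$ and the claimed constant $3$ comfortably absorbs both the non-optimal choice of $z$ and the $3^{\omega(d)}$ error. Apart from this, the argument is a standard textbook Selberg-sieve computation.
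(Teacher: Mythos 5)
The paper does not actually prove this lemma: it is the classical Brun--Titchmarsh theorem, imported verbatim from Cojocaru--Murty \cite{key-5} (Theorem 7.3.1), so there is no in-paper argument to compare against. Your plan --- Selberg's $\Lambda^{2}$ sieve on the progression $l\ (\mathrm{mod}\ k)$ in $(x-y,x]$, with density $1/(kd)$ for squarefree $d$ coprime to $k$, the lower bound $\sum_{d\le z,(d,k)=1}\mu^{2}(d)/\varphi(d)\ge(\varphi(k)/k)\log z$, and an $O(z^{2})$-type remainder --- is exactly the route taken in that reference, and each of these ingredients is correct as stated.

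The one genuine gap is your choice of sifting level $z=\sqrt{y/k}\,/(\log y)^{A}$: it depends on $y$ rather than on the ratio $y/k$, and this destroys the uniformity over the full range $k<y\le x$. Take $k$ prime and $y/k\asymp(\log y)^{2A+1}$. Then $z\asymp(\log y)^{1/2}$, so $\log z\sim\tfrac{1}{2}\log\log y\sim\tfrac{1}{2(2A+1)}\log(y/k)$, and the sieve main term $y/(\varphi(k)\log z)$ is $\sim 2(2A+1)\cdot y/(\varphi(k)\log(y/k))$, at least twice the claimed bound; meanwhile the trivial count $y/k+1$ also exceeds $3y/(\varphi(k)\log(y/k))$ here because $\log(y/k)\to\infty$ and $\varphi(k)\approx k$. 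So neither of your two regimes covers this case, for any fixed $A$. The repair is to let $z$ be a function of $y/k$ alone, e.g.\ $z=(y/k)^{1/2}/(\log(y/k))^{2}$ once $y/k$ exceeds an absolute constant, which restores $\log z=(\tfrac12-o(1))\log(y/k)$ as $y/k\to\infty$. Even then, note that the trivial bound $y/k+1$ only beats $3y/(k\log(y/k))$ for $y/k\lesssim e^{3}$, while the crude estimate $\mathcal{G}(z)\ge(\varphi(k)/k)\log z$ only delivers the constant $3$ once $y/k$ is quite large; closing the intermediate range of bounded $y/k$ is precisely where the cited proof has to work harder (e.g.\ via the sharper asymptotic $\sum_{d\le z}\mu^{2}(d)/\varphi(d)=\log z+c+o(1)$ with $c>1$), and your sketch should not dismiss it as a ``direct check.''
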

The next lemma concerns the decomposition of rational prime numbers
in cyclotomic fields.
\begin{lem}
\label{Lemma 2.1} \textup{(See \cite[Chapter 1, Proposition 10.2]{key-3})}
Let $n=\prod_{p}p^{v_{p}}$be the prime factorization of $n$ and,
for every prime number $p,$ let $f_{p}$ be the smallest positive
integer such that

\[
p^{f_{p}}\equiv1\left(\bmod\:n/p^{v_{p}}\right).
\]
Then, in $Q(\zeta_{n})$ one has the factorization

\[
p=\left(\mathfrak{p}_{1}\cdots\mathfrak{p}_{r}\right)^{\varphi(p^{v_{p}})},
\]
where $\mathfrak{p}_{1},\cdots,\mathfrak{p}_{r}$ are distinct prime
ideals, all of degree $f_{p}.$
\end{lem}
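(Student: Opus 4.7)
The plan is to reduce the statement to a prime-power part and a tame part and combine them via the identification $\mathbb{Q}(\zeta_n)=\mathbb{Q}(\zeta_{p^{v_p}})\cdot\mathbb{Q}(\zeta_m)$, where $m=n/p^{v_p}$. These two subfields are linearly disjoint over $\mathbb{Q}$ since their degrees $\varphi(p^{v_p})$ and $\varphi(m)$ multiply to $\varphi(n)$. First I would establish total ramification of $p$ in $\mathbb{Q}(\zeta_{p^{v_p}})$: the minimal polynomial of $1-\zeta_{p^{v_p}}$ is $p$-Eisenstein, which yields the single-prime factorization $(p)=(1-\zeta_{p^{v_p}})^{\varphi(p^{v_p})}$ with ramification index $\varphi(p^{v_p})$ and residue degree $1$.

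Next I would analyze the behavior of $p$ in $\mathbb{Q}(\zeta_m)$. Since $(p,m)=1$, the prime $p$ is unramified there because the discriminant divides a power of $m$. The Galois group is canonically $(\mathbb{Z}/m\mathbb{Z})^{\times}$, so the Frobenius at any prime above $p$ is the class of $p\bmod m$, whose multiplicative order is by definition $f_p$. Hence $p$ splits in $\mathbb{Q}(\zeta_m)$ as a product of $r=\varphi(m)/f_p$ distinct unramified primes, each of residue degree $f_p$. Lifting to the compositum using multiplicativity of the invariants $e$, $f$, and the number of primes in linearly disjoint extensions, the $\varphi(p^{v_p})$-fold ramification propagates over each of the $r$ primes, residue degrees remain $f_p$, and the identity $efg=\varphi(n)$ forces exactly $r$ distinct primes above $p$ in $\mathbb{Q}(\zeta_n)$, each of ramification index $\varphi(p^{v_p})$ and residue degree $f_p$.

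The cleanest self-contained route, which I would prefer in order to bypass generic compositum bookkeeping, is to apply the Kummer--Dedekind theorem to $\mathcal{O}_{\mathbb{Q}(\zeta_n)}=\mathbb{Z}[\zeta_n]$ and factor $\Phi_n(x)$ modulo $p$ directly via the classical congruence
\[
\Phi_n(x)\equiv \Phi_m(x)^{\varphi(p^{v_p})}\pmod{p},
\]
which follows from the iterated use of $\Phi_{pk}(x)=\Phi_k(x^p)/\Phi_k(x)$ together with the Frobenius identity $\Phi_m(x^p)\equiv\Phi_m(x)^p\pmod{p}$. This reduces the problem to factoring $\Phi_m(x)$ over $\mathbb{F}_p$: its roots are the primitive $m$-th roots of unity, which lie in $\mathbb{F}_{p^{f_p}}$ and in no smaller subfield, so $\Phi_m$ splits into exactly $r=\varphi(m)/f_p$ distinct irreducible factors of degree $f_p$.

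The main technical obstacle is ensuring that $\mathcal{O}_{\mathbb{Q}(\zeta_n)}=\mathbb{Z}[\zeta_n]$ holds for general $n$, so that Kummer--Dedekind applies unconditionally; the standard proof of this ring-of-integers identity itself proceeds by induction on the number of prime factors of $n$, treating the prime-power step via the Eisenstein argument above and the coprime composition step via a discriminant-coprimality argument. Once this is in hand, the rest of the proof is essentially the explicit factorization of $\Phi_n(x)\bmod p$ just described, and matching multiplicities and residue degrees to the asserted shape $(p)=(\mathfrak{p}_1\cdots\mathfrak{p}_r)^{\varphi(p^{v_p})}$ is immediate.
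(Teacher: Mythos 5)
The paper does not prove this lemma; it is quoted verbatim from Neukirch (Chapter I, Proposition 10.2), so there is no in-paper argument to compare against. Your proposal is a correct reconstruction of the standard proof, and in fact both of your routes are essentially the ones Neukirch combines: total ramification in $\mathbb{Q}(\zeta_{p^{v_p}})$ via the Eisenstein polynomial of $1-\zeta_{p^{v_p}}$, the Frobenius/order-of-$p$ computation giving $r=\varphi(m)/f_p$ distinct primes of residue degree $f_p$ in $\mathbb{Q}(\zeta_m)$, and the squeeze $efg=\varphi(n)$ (or, equivalently, Kummer--Dedekind applied to $\Phi_n(x)\equiv\Phi_m(x)^{\varphi(p^{v_p})}\pmod p$ once $\mathcal{O}_{\mathbb{Q}(\zeta_n)}=\mathbb{Z}[\zeta_n]$ is known). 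You correctly identify the one genuine prerequisite for the Kummer--Dedekind route, namely the ring-of-integers identity for composite $n$, which is needed precisely at the ramified prime $p\mid n$ where a conductor argument alone does not suffice; the compositum route avoids this. Nothing to object to.
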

\begin{lem}
\label{Lemma 2.4}Let $q$ be a fixed prime number, or $q=1.$ As
$x\rightarrow\infty,$ assume that

\[
k<x^{\frac{1}{D\ln_{2}x}}
\]
where $D$ is a positive constant. Then, we have the following asymptotic
estimate

\[
\sum_{\substack{P^{+}(u)\leq x\\
P^{-}(u)\geq q\\
u>x^{k}
}
}\frac{d_{2k}(u^{2})q^{\Omega(u)}}{u^{2}}\ll\exp\left(-\frac{Dk}{2}\ln_{2}x\right),
\]
where $P^{+}(u)$ and $P^{-}(u)$ denote the greatest and the smallest
prime factors of $u$ respectively, and $\Omega(u)$ represents the
total number of prime factors of $u$ counting multiplicities.
\end{lem}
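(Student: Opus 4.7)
The plan is to apply Rankin's trick together with a careful analysis of the resulting Euler product. For any $\sigma\in(0,1)$ to be chosen, bounding the indicator $\mathbf{1}_{u>x^{k}}\leq(u/x^{k})^{\sigma}$ and using the multiplicativity of the summand yields
\[
\sum_{\substack{P^{+}(u)\leq x\\ P^{-}(u)\geq q\\ u>x^{k}}}\frac{d_{2k}(u^{2})q^{\Omega(u)}}{u^{2}}\leq x^{-k\sigma}\prod_{q\leq p\leq x}F_{k}\!\left(\frac{q}{p^{2-\sigma}}\right),
\]
where $F_{k}(z):=\sum_{j\geq0}\binom{2j+2k-1}{2k-1}z^{j}$. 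A direct generating-function computation gives the closed form $F_{k}(z)=[(1+\sqrt{z})^{2k}+(1-\sqrt{z})^{2k}]/[2(1-z)^{2k}]$, from which one reads off both the Taylor expansion $F_{k}(z)=1+(2k^{2}+k)z+O(k^{4}z^{2})$ near the origin and the uniform bound $F_{k}(z)\leq(1-\sqrt{z})^{-2k}$.

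The heart of the argument is the estimate $\log\prod_{q\leq p\leq x}F_{k}(q/p^{2-\sigma})\ll k\ln_{2}x$. I plan to obtain this by splitting the primes into three ranges according to the size of $k^{2}q/p^{2-\sigma}$. For the large primes $p>k\sqrt{q}$, one has $k^{2}q/p^{2-\sigma}<1$, so the Taylor expansion gives $\log F_{k}\ll k^{2}q/p^{2-\sigma}$, and partial summation yields a contribution of $O(k\sqrt{q}/\ln k)=o(k\ln_{2}x)$. For the medium primes $q<p\leq k\sqrt{q}$, the uniform bound yields $\log F_{k}\ll k\sqrt{q}/p^{1-\sigma/2}$; summing and applying Mertens' estimate $\sum_{p\leq y}1/p=\ln_{2}y+O(1)$ gives
\[
\sum_{q<p\leq k\sqrt{q}}\log F_{k}\ll k\sqrt{q}\,(k\sqrt{q})^{\sigma/2}\,\ln_{2}(k\sqrt{q}),
\]
and here the hypothesis $k<x^{1/(D\ln_{2}x)}$, together with the forthcoming choice $\sigma=O(\ln_{2}x/\ln x)$, forces both $(k\sqrt{q})^{\sigma/2}=O(1)$ and $\ln_{2}(k\sqrt{q})\leq\ln_{2}x+O(1)$, so this range contributes $O(k\ln_{2}x)$. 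The finitely many primes in $[q,2q]$ contribute only $O(k)$ via the uniform bound, and these three estimates combine to give $\log\prod F_{k}\leq Ck\ln_{2}x$ for a constant $C=C(q)$.

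Once this Euler-product bound is in hand, I choose $\sigma=(C+D/2)\ln_{2}x/\ln x$, which lies in $(0,1)$ for $x$ large, and obtain
\[
x^{-k\sigma}\prod_{p}F_{k}\leq\exp\!\left(-k(C+D/2)\ln_{2}x+Ck\ln_{2}x\right)=\exp\!\left(-\tfrac{D}{2}k\ln_{2}x\right),
\]
as required. I expect the main obstacle to be precisely the sharp $O(k\ln_{2}x)$ estimate on the Euler product: naively applying the uniform bound $F_{k}(z)\leq(1-\sqrt{z})^{-2k}$ to every prime inflates the contribution by a spurious $\sqrt{z}$-factor, while naively using only the Taylor expansion fails for primes near $q$ where $k^{2}z\geq 1$, so the three-range split is essential. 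The hypothesis $k<x^{1/(D\ln_{2}x)}$ enters exactly at the step where one needs $\ln_{2}(k\sqrt{q})\leq\ln_{2}x+O(1)$ and $\sigma\ln(k\sqrt{q})=O(1)$ simultaneously.
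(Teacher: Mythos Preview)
Your proposal is correct and follows essentially the same route as the paper: Rankin's trick with a shift of order $\ln_{2}x/\ln x$, a split of the resulting Euler product at primes of size $\approx k$ (the paper takes $R=2k^{2/\sigma}$), the crude bound $F_{k}(z)\le(1-\sqrt{z})^{-2k}$ for small primes, and the quadratic refinement $\log F_{k}(z)\ll k^{2}z$ for large ones. The only notable difference is that the paper first separates the prime $p=q$ by decomposing the sum as $T_{1}+T_{2}$ according to whether $P^{-}(u)>q$ or $P^{-}(u)=q$ before passing to the Euler product, whereas you absorb $p=q$ directly into your small-prime range via the uniform bound; your treatment is slightly more streamlined in this respect.
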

\begin{proof}
Set

\begin{align*}
T_{1} & :=\sum_{\substack{P^{+}(u)\leq x\\
P^{-}(u)>q\\
u>x^{k}
}
}\frac{d_{2k}(u^{2})q^{\Omega(u)}}{u^{2}},\\
T_{2} & :=\sum_{\substack{P^{+}(u)\leq x\\
P^{-}(u)=q\\
u>x^{k}
}
}\frac{d_{2k}(u^{2})q^{\Omega(u)}}{u^{2}},
\end{align*}

We first estimate $T_{1}.$ Take $R=2k^{2/\sigma}$ with $\sigma=2-\frac{D\ln_{2}x}{\ln x}$
and assume that $x$ large enough such that $\sigma>1.99.$ Set

\begin{align*}
T_{1,1} & :=\prod_{q<p\leq R}\frac{(1-\frac{q}{p^{\sigma/2}})^{-2k}+(1+\frac{q}{p^{\sigma/2}})^{-2k}}{2},\\
T_{1,2} & :=\prod_{R<p<x}\frac{(1-\frac{q}{p^{\sigma/2}})^{-2k}+(1+\frac{q}{p^{\sigma/2}})^{-2k}}{2}.
\end{align*}
Then

\begin{align}
\begin{aligned}T_{1} & <\frac{1}{x^{k(2-\sigma)}}\sum_{\substack{P^{+}(u)\leq x\\
P^{-}(u)>q
}
}\frac{d_{2k}(u^{2})q^{\Omega(u)}}{u^{\sigma}}\\
 & =\frac{1}{x^{k(2-\sigma)}}\prod_{\substack{P^{+}(u)\leq x\\
P^{-}(u)>q
}
}\frac{(1-\frac{q}{p^{\sigma/2}})^{-2k}+(1+\frac{q}{p^{\sigma/2}})^{-2k}}{2}\\
 & =\frac{1}{x^{k(2-\sigma)}}T_{1,1}\cdot T_{1,2}.
\end{aligned}
\label{eq:4-1-1}
\end{align}
For $T_{1,1},$ we have

\begin{align*}
T_{1,1} & <\prod_{q<p\leq R}(1-\frac{q}{p^{\sigma/2}})^{-2k}=\exp(kO(\sum_{q<p\leq R}\frac{1}{p^{\sigma/2}}))\\
 & =\exp(kO(R^{1-\frac{\sigma}{2}}\ln_{2}R))=\exp(kO(\ln_{2}x)).
\end{align*}
For $T_{1,2},$ we have

\begin{align*}
T_{1,2} & <\prod_{R<p}(1-\frac{1}{p^{\sigma}})^{-(2k^{2}+2k)}=\exp(k^{2}O(\sum_{p>R}\frac{1}{p^{\sigma}}))\\
 & =\exp(k^{2}O(R^{1-\sigma}))=\exp(O(k)),
\end{align*}
where in the first step we have used \cite[eq.(2.9)]{key-1}. Substituting
the estimates for $T_{1,1}$ and $T_{1,2}$ into the right side of
\eqref{eq:4-1-1} we get

\[
T_{1}=\frac{1}{x^{k(2-\sigma)}}\exp(kO(\ln_{2}x)).
\]

Now we estimate $T_{2}.$ Set $u=q^{l}v,P^{-}(v)>q.$ Then

\[
T_{2}=\sum_{\substack{l=1}
}^{\infty}\frac{d_{2k}(q^{2l})}{q^{l}}\sum_{\substack{q<P(v)\leq x\\
v>x^{k}/q^{l}
}
}\frac{d_{2k}(v^{2})q^{\Omega(v)}}{v^{2}}
\]
Proceeding as in estimating $T_{1},$ we have

\begin{align*}
\sum_{\substack{q<P(v)\leq x\\
v>x^{k}/2^{l}
}
}\frac{d_{2k}(v^{2})q^{\Omega(v)}}{v^{2}} & <\frac{1}{(x^{k}/q^{l})^{2-\sigma}}\sum_{\substack{q<P(v)\leq x\\
v>x^{k}/q^{l}
}
}\frac{d_{2k}(v^{2})q^{\Omega(v)}}{v^{\sigma}}\\
 & <\frac{1}{(x^{k}/q^{l})^{2-\sigma}}\sum_{\substack{q<P(v)\leq x}
}\frac{d_{2k}(v^{2})q^{\Omega(v)}}{v^{\sigma}}\\
 & =\frac{1}{(x^{k}/q^{l})^{2-\sigma}}\prod_{q<p<x}\frac{(1-\frac{q}{p^{\sigma/2}})^{-2k}+(1+\frac{q}{p^{\sigma/2}})^{-2k}}{2}.
\end{align*}
Since

\[
\prod_{q<p<x}\frac{(1-\frac{q}{p^{\sigma/2}})^{-2k}+(1+\frac{q}{p^{\sigma/2}})^{-2k}}{2}=\exp(kO(\ln_{2}x)),
\]
we obtain

\[
T_{2}=\exp(kO(\ln_{2}x))\sum_{\substack{l=1}
}^{\infty}\frac{d_{2k}(q^{2l})}{q^{l}(x^{k}/q^{l})^{2-\sigma}}=\frac{\exp(kO(\ln_{2}x))}{x^{k(2-\sigma)}}\sum_{\substack{l=1}
}^{\infty}\frac{d_{2k}(q^{2l})}{q^{l(\sigma-1)}}
\]
Notice that

\[
\sum_{\substack{l=1}
}^{\infty}\frac{d_{2k}(q^{2l})}{q^{l(\sigma-1)}}<\left(1-\frac{1}{q^{\frac{\sigma-1}{2}}}\right)^{-2k}=\exp(O(k)).
\]
This implies that

\[
T_{2}=\frac{\exp(kO(\ln_{2}x))}{x^{k(2-\sigma)}}
\]

In view of the above we get

\[
\sum_{\substack{P^{+}(u)\leq x\\
P^{-}(u)\geq q\\
u>x^{k}
}
}\frac{d_{2k}(u^{2})q^{\Omega(u)}}{u^{2}}=T_{1}T_{2}\ll\frac{\exp(kO(\ln_{2}x))}{x^{k(2-\sigma)}}.
\]
This completes the proof.
\end{proof}
 The following lemma gives upper bounds for certain Euler products
over cyclotomic fields.
\begin{lem}
\label{Lemma 2.5} Suppose that

\[
2-\frac{D\ln_{2}x}{\ln x}<\sigma\leq2,\quad x>0,
\]
and as $x\rightarrow\infty,$ we have

\[
k<x^{\frac{1}{D\ln_{2}x}},\quad n<(\ln x)^{A}
\]
where $A,D$ are positive constants. Let

\[
Z_{1}:=\prod_{\substack{(\mathfrak{p},n)=1\\
f_{\mathfrak{p}}=1\\
N(\mathfrak{p})\leq x
}
}\left(\frac{\left(1+\frac{1}{N(\mathfrak{p})^{\sigma/2}}\right)^{-2k}+\left(1-\frac{1}{N(\mathfrak{p})^{\sigma/2}}\right)^{-2k}}{2}\right),
\]
\begin{align*}
Z_{2} & :=\prod_{\substack{\mathfrak{p}|n\\
N(\mathfrak{p})\leq x
}
}\left(\frac{\left(1+\frac{1}{N(\mathfrak{p})^{\sigma/2}}\right)^{-2k}+\left(1-\frac{1}{N(\mathfrak{p})^{\sigma/2}}\right)^{-2k}}{2}\right)
\end{align*}
and

\[
Z_{3}:=\prod_{\substack{(\mathfrak{p},n)=1\\
f_{\mathfrak{p}}>1\\
N(\mathfrak{p})\leq x
}
}\left(\frac{\left(1+\frac{1}{N(\mathfrak{p})^{\sigma/2}}\right)^{-2k}+\left(1-\frac{1}{N(\mathfrak{p})^{\sigma/2}}\right)^{-2k}}{2}\right).
\]
where $\mathfrak{p}$ denotes a prime ideal in the $n$-th cyclotomic
field $\ensuremath{K_{n}=Q(\zeta_{n})}$ and, $f_{\mathfrak{p}}$
and $N(\mathfrak{p})$ represents its inertia degree and norm respectively.
Then, for $j\in\{1,2,3\},$ we have

\[
Z_{j}<\exp(C_{j}(A)k\ln_{2}x)
\]
where $C_{j}(A)$ is a positive constant depending on $A.$
\end{lem}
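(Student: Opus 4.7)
The plan is to mimic the proof of Lemma~\ref{Lemma 2.4}, replacing sums over rational primes by sums over prime ideals in $K_{n}$, organized via Lemma~\ref{Lemma 2.1} and Lemma~\ref{Lemma 2.2}. Denote the generic factor by
\[
A_{k}(y):=\tfrac{1}{2}\bigl[(1-y)^{-2k}+(1+y)^{-2k}\bigr];
\]
from its Taylor expansion $A_{k}(y)=1+k(2k+1)y^{2}+O(k^{4}y^{4})$ one has $\log A_{k}(y)=O(k^{2}y^{2})$ whenever $y\leq 1/(2k)$, while for larger $y<1$ the cruder bound $\log A_{k}(y)\leq -2k\log(1-y)$ applies, giving $\log A_{k}(y)=O(ky)$ so long as $y$ is bounded away from~$1$. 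Take $R:=(2k)^{2/\sigma}$ as in Lemma~\ref{Lemma 2.4}, so that the threshold $N(\mathfrak{p})=R$ corresponds to $y=1/(2k)$; this is the natural splitting point for each $Z_{j}$.

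For $Z_{1}$, Lemma~\ref{Lemma 2.1} identifies the unramified degree-one prime ideals with pairs $(p,\mathfrak{p})$ where $p\equiv 1\pmod{n}$ and $\mathfrak{p}$ is one of the $\varphi(n)$ primes above~$p$. Consequently
\[
\log Z_{1}\leq Ck\,\varphi(n)\!\!\sum_{\substack{p\equiv 1(\mathrm{mod}\,n)\\ p\leq R}}\!\!p^{-\sigma/2}+C k^{2}\varphi(n)\!\!\sum_{\substack{p\equiv 1(\mathrm{mod}\,n)\\ R<p\leq x}}\!\!p^{-\sigma}.
\]
The hypotheses $\sigma>2-D\ln_{2}x/\ln x$ and $k<x^{1/(D\ln_{2}x)}$ force $R^{1-\sigma/2}=O(1)$ and, crucially, $p^{1-\sigma/2}=O(1)$ for every $p\leq R$. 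The first inner sum is then majorized by $\sum_{p\equiv 1(n),\,p\leq R}1/p$, which by a Mertens-type estimate in arithmetic progressions equals $O(\log_{2}R/\varphi(n))$; the factor $\varphi(n)$ cancels, leaving $O(k\log_{2}x)$. For the second inner sum, Brun--Titchmarsh and partial summation give $\varphi(n)\sum\ll 1/(k\log R)$, contributing $O(k\log_{2}x/\log x)=o(k\log_{2}x)$.

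The products $Z_{2}$ and $Z_{3}$ are handled along the same lines. For $Z_{3}$ one has $N(\mathfrak{p})\geq p^{2}\geq 4$, so $y\leq 2^{-\sigma}$ is small, and the product rewrites as $\sum_{p\nmid n,\,f_{p}\geq 2}(\varphi(n)/f_{p})\,p^{-f_{p}\sigma/2}$. The $f_{p}=2$ contribution dominates, and by Chebotarev the density of such primes within those coprime to~$n$ is $O(2^{\omega(n)}/\varphi(n))$, which combined with the convergence of $\sum p^{-\sigma}$ for $\sigma>1$ yields a bound absorbable into $C_{3}(A)\,k\log_{2}x$ using $n\leq(\ln x)^{A}$. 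For $Z_{2}$, only the $\omega(n)=O(\log_{2}x)$ rational primes dividing~$n$ contribute, with at most $\varphi(n)$ prime ideals above each; each factor is $O(k)$ in log, and the total depends on~$n$ only through a polynomial-in-$\log x$ factor that is absorbed into $C_{2}(A)$.

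The main obstacle is precisely the $\varphi(n)$ multiplicity of prime ideals above a rational prime, which, since $\varphi(n)$ can be as large as $(\log x)^{A}$, threatens to inflate bounds beyond the target $O(k\log_{2}x)$. For $Z_{1}$ the difficulty is resolved cleanly: the $\varphi(n)$ multiplicity cancels exactly against the $1/\varphi(n)$ density afforded by Lemma~\ref{Lemma 2.2}, and the delicate point is verifying, via the specific choice of $R$ and the admissible range of $\sigma$, that $p^{1-\sigma/2}$ remains uniformly bounded on the whole range $p\leq R$, so the Mertens regime applies. For $Z_{2}$ and $Z_{3}$ no such exact cancellation is available; one must instead exploit the convergence of $\sum p^{-\sigma}$ (for $Z_{3}$) and the smallness of $\omega(n)$ together with the few ramified primes (for $Z_{2}$), permitting $C_{j}(A)$ to grow with~$A$ as required.
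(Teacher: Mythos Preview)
Your treatment of $Z_{1}$ is essentially the paper's argument: split at $R=(2k)^{2/\sigma}$, use the crude bound $\log A_{k}(y)=O(ky)$ below $R$ and the Taylor bound $O(k^{2}y^{2})$ above, then let Brun--Titchmarsh (Lemma~\ref{Lemma 2.2}) cancel the $\varphi(n)$ multiplicity against the $1/\varphi(n)$ density of primes $\equiv 1\pmod n$. Your shortcut of quoting a ``Mertens-type estimate in arithmetic progressions'' amounts to what the paper does via explicit partial summation with Lemma~\ref{Lemma 2.2}.

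The genuine gap is in $Z_{2}$ and $Z_{3}$. For $Z_{3}$ your argument via Chebotarev density does not deliver the target: density statements do not directly control $\sum_{p} p^{-\sigma}$ for $\sigma>1$, and even heuristically the bound you obtain is $\log Z_{3}\ll k\cdot 2^{\omega(n)}$. For $n$ primorial-like with $n\le(\ln x)^{A}$ one has $2^{\omega(n)}\asymp\exp(c\,\ln_{2}x/\ln_{3}x)$, which is much larger than $\ln_{2}x$, so this cannot be ``absorbed into $C_{3}(A)\,k\ln_{2}x$''. For $Z_{2}$ your count ``$\omega(n)$ ramified primes, at most $\varphi(n)$ ideals above each, each factor $O(k)$'' gives $\log Z_{2}\ll k\,\varphi(n)\,\omega(n)$, a polynomial in $\ln x$; again this is \emph{not} $O(k\ln_{2}x)$ and cannot be absorbed into the constant $C_{2}(A)$ as you claim.

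What you are missing is the arithmetic fact that forces the norms to be large. For an unramified residue class $l\pmod n$ with $f_{l}\ge 2$, the relation $l^{f_{l}}\equiv 1\pmod n$ gives $l^{f_{l}}>n$, hence every prime ideal in $Z_{3}$ satisfies $N(\mathfrak{p})=p^{f_{l}}\ge l^{f_{l}}>n$. Likewise, for $p\mid n$ the inertia degree $f_{p}$ is the order of $p$ modulo $n/p^{v_{p}}$, whence $p^{f_{p}}>n/p^{v_{p}}$. These lower bounds supply a factor $n^{-\sigma/2}$ (respectively $(n/p^{v_{p}})^{-\sigma/2}$) in each term, and since $\varphi(n)\,n^{-\sigma/2}\le n^{1-\sigma/2}=O_{A}(1)$ under the hypothesis $n<(\ln x)^{A}$ and $\sigma>2-D\ln_{2}x/\ln x$, this is exactly what neutralises the $\varphi(n)$ multiplicity. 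The paper organises the sum by residue class $l$ and combines this with the trivial bound $\pi(m;n,l)\ll m/n$ to obtain $O_{A}(k\ln_{2}x)$; your observation ``$N(\mathfrak{p})\ge p^{2}\ge 4$'' is too weak---it is $N(\mathfrak{p})>n$ that matters.
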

\begin{proof}
We first consider $Z_{1}.$ Define $R=2k^{2/\sigma}.$ By using Lemma
\ref{Lemma 2.1}, we can rewrite $Z_{1}$ as 
\[
Z_{1}=\prod_{\substack{p\equiv1\bmod n\\
p\leq x
}
}\left(\frac{(1-\frac{1}{p^{\sigma/2}})^{-2k}+(1+\frac{1}{p^{\sigma/2}})^{-2k}}{2}\right)^{\varphi(n)}.
\]
We now decompose $Z_{1}$ into two parts:

\[
Z_{1}=Z_{1,1}Z_{1,2}
\]
where

\begin{align*}
Z_{1,1} & =\prod_{\substack{p\equiv1\bmod n\\
p\leq R
}
}\left(\frac{(1-\frac{1}{p^{\sigma/2}})^{-2k}+(1+\frac{1}{p^{\sigma/2}})^{-2k}}{2}\right)^{\varphi(n)},\\
Z_{1,2} & =\prod_{\substack{p\equiv1\bmod n\\
R\leq p\leq x
}
}\left(\frac{(1-\frac{1}{p^{\sigma/2}})^{-2k}+(1+\frac{1}{p^{\sigma/2}})^{-2k}}{2}\right)^{\varphi(n)}.
\end{align*}

For $Z_{1,1},$ we have

\begin{align*}
Z_{1,1} & \ll\prod_{\substack{p\equiv1\bmod n\\
p\leq R
}
}\left(1-\frac{1}{p^{\sigma/2}}\right)^{-2k\varphi(n)}=\exp\left(k\varphi(n)O\left(\sum_{\substack{p\equiv1\bmod n\\
p\leq R
}
}\frac{1}{p^{\sigma/2}}\right)\right).
\end{align*}
Applying Lemma \ref{Lemma 2.2} we get

\begin{align*}
\sum_{\substack{p\equiv1\bmod n\\
p\leq R
}
}\frac{\varphi(n)}{p^{\sigma/2}} & \ll\varphi(n)\sum_{R-1\geq m>n}\frac{\pi(m,n,1)}{m^{\sigma/2+1}}+\varphi(n)\frac{\pi(R;n,1)}{R^{\sigma/2}}\\
 & \ll\varphi(n)\sum_{\substack{m^{1/2}\leq n}
}\frac{\pi(m;n,1)}{m^{\sigma/2+1}}+\varphi(n)\sum_{\substack{n<m^{1/2}\leq R^{1/2}}
}\frac{\pi(m;n,1)}{m^{\sigma/2+1}}+\frac{R}{R^{\sigma/2}\ln R/n}\\
 & \ll\sum_{\substack{m^{1/2}\leq n}
}\frac{\varphi(n)}{m^{\sigma/2}n}+\sum_{\substack{n^{2}<m\leq R}
}\frac{1}{m^{\sigma/2}\ln m}+O_{A}(1)\\
 & \ll n^{2-\sigma}\ln n+R^{1-\sigma/2}\ln_{2}R\\
 & \ll_{A}\ln_{2}x.
\end{align*}
and so
\[
Z_{1,1}<\exp\left(k\varphi(n)O_{A}\left(\ln_{2}x\right)\right).
\]

It is easy to see that
\begin{align*}
Z_{1,2} & \ll\prod_{\substack{p\equiv1\bmod n\\
R\leq p\leq x
}
}\left(1-\frac{1}{p^{\sigma}}\right)^{-(2k^{2}+2k)\varphi(n)}\\
 & =\exp\left(k^{2}\varphi(n)O\left(\sum_{\substack{p\equiv1\bmod n\\
R\leq p\leq x
}
}\frac{1}{p^{\sigma}}\right)\right),
\end{align*}
where in the first step we have used \cite[eq.(2.9)]{key-1}. Notice
that

\[
k^{2}\varphi(n)\sum_{\substack{p\equiv1\bmod n\\
R\leq p
}
}\frac{1}{p^{\sigma}}\ll k^{2}\varphi(n)\sum_{R<m}\frac{1}{nm^{\sigma}}\ll k^{2}R^{1-\sigma}\ll R\ll ke^{D}.
\]
Then
\[
Z_{1,2}<\exp\left(O(ke^{D})\right).
\]

Combining the estimates for $Z_{1,1}$ and $Z_{1,2}$ gives 
\[
Z_{1}<\exp(C_{1}(A)k\ln_{2}x)
\]
 for a positive constant $C_{1}(A).$

We next consider $Z_{2}.$ For $Z_{2},$ applying Lemma \ref{Lemma 2.1},
we have

\begin{align*}
Z_{2} & <\prod_{\substack{l(\bmod n)\\
(l,n)=1,l>1
}
}\prod_{\substack{p\equiv l(\bmod n)\\
p^{f_{l}}\leq x
}
}\left(1-\frac{1}{p^{f_{l}\sigma/2}}\right)^{-2k\frac{\varphi(n)}{f_{l}}}\\
 & =\exp\left(O\left(k\sum_{\substack{1<l<n\\
(l,n)=1
}
}\sum_{\substack{p\equiv l(\bmod n)\\
p^{f_{l}}\leq x
}
}\frac{\varphi(n)}{f_{l}p^{f_{l}\sigma/2}}\right)\right).
\end{align*}
where $f_{l}$ represents the order of $l(\bmod n).$

By the inequlities $\sum_{m>n}\frac{1}{m^{f\sigma/2}}\ll\frac{1}{n^{f\sigma/2-1}},\pi(m,n,l)\ll\frac{m}{n},m>n$
and $l^{f_{l}}>n,f_{l}\geq2,$ we have

\begin{align*}
\sum_{\substack{p\equiv l(\bmod n)\\
p^{f_{l}}\leq x
}
}\frac{\varphi(n)}{f_{l}p^{f_{l}\sigma/2}} & \ll\frac{\varphi(n)}{f_{l}}\sum_{n=1}^{\infty}\frac{\pi(m,n,l)}{m^{f_{l}\sigma/2+1}}\\
 & <\frac{\varphi(n)}{f_{l}}\left(\frac{1}{l^{f_{l}\sigma/2+1}}+2\sum_{m>n}\frac{1}{nm^{f_{l}\sigma/2}}\right)\\
 & \ll\varphi(n)\left(\frac{1}{n^{\sigma/2}l}+\frac{1}{n^{\sigma}}\right).
\end{align*}
Thus

\begin{align*}
\sum_{\substack{1<l<n\\
(l,n)=1
}
}\sum_{\substack{p\equiv l(\bmod n)\\
p^{f_{l}}\leq x
}
}\frac{k\varphi(n)}{f_{l}p^{f_{l}\sigma/2}} & \ll k\sum_{\substack{1<l<n\\
(l,n)=1
}
}\varphi(n)\left(\frac{1}{n^{\sigma/2}l}+\frac{1}{n^{\sigma}}\right)\\
 & \ll k\varphi(n)\left(\frac{\ln n}{n^{\sigma/2}}+\frac{1}{n^{\sigma-1}}\right)\\
 & \ll k\left(n^{\frac{D\ln_{2}x}{2\ln x}}\ln n+n^{\frac{D\ln_{2}x}{\ln x}}\right)\\
 & \ll_{A}k\ln_{2}x.
\end{align*}
This proves that 
\[
Z_{2}<\exp(C_{2}(A)k\ln_{2}x)
\]
 for a positive constant $C_{2}(A).$

We finally consider $Z_{3}.$ For $Z_{3}$ we have

\begin{align*}
Z_{3} & <\prod_{\substack{\mathfrak{p}|n}
}\left(1-\frac{1}{N(\mathfrak{p})^{\sigma/2}}\right)^{-2k}\\
 & =\exp\left(O\left(k\sum_{\substack{\mathfrak{p}|n}
}\frac{1}{N(\mathfrak{p})^{\sigma/2}}\right)\right).
\end{align*}
Let $f_{p}$ denote the order of $p(\bmod\frac{n}{p^{v_{p}}}).$ It
follows from the inequalities $p^{f_{p}}>\frac{n}{p^{v_{p}}}$ and
$\sum_{p|n}1\ll\ln n\ll_{A}\ln_{2}x$ that

\begin{align*}
\sum_{\substack{\mathfrak{p}|n}
}\frac{1}{N(\mathfrak{p})^{\sigma/2}} & =\sum_{p|n}\frac{k\varphi(n)}{f_{p}\varphi(p^{v_{p}})}\frac{1}{p^{f_{p}\sigma/2}}\\
 & \ll\sum_{p|n}\left(\frac{p^{v_{p}}}{n}\right)^{\sigma/2}\frac{k\varphi(n)}{f_{p}\varphi(p^{v_{p}})}\\
 & \ll kn^{\frac{D\ln_{2}x}{2\ln x}}\sum_{p|n}1\ll_{A}k\ln_{2}x.
\end{align*}
This proves that 
\[
Z_{3}<\exp(C_{3}(A)k\ln_{2}x),
\]
 for a positive constant $C_{3}(A).$ This concludes the proof.
\end{proof}
We also need the following result.
\begin{lem}
\textup{\label{Lemma 2.3}(Hoeffding's inequality, \cite[Lemma 3.2]{A})}
Assume that $\{X_{k}\}_{k\geq1}$ is a sequence of independent random
variables, and $\ensuremath{\mathbb{P}(X_{k}=1)=\mathbb{P}(X_{k}=-1)=\frac{1}{2}}.$
Also, assume that $\{a_{k}\}_{k\geq1}$ is a sequence of real numbers
such that $\ensuremath{\sum_{k=1}^{\infty}a_{k}^{2}<\infty}.$ Then,
for any $\ensuremath{\lambda>0},$ we have

\[
\mathbb{P}\left(\sum_{k=1}^{\infty}a_{k}X_{k}\geq\lambda\right)\leq\exp\left(-\frac{\lambda^{2}}{2\sum_{k=1}^{\infty}a_{k}^{2}}\right).
\]
\end{lem}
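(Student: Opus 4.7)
My plan is to adapt the Angelo--Xu strategy to the Sato--Tate setting by exploiting a Chebyshev-polynomial structure for $\varrho$. Substituting $\varrho(p)=2p^{m/2}\cos\theta_p$ and $\varrho(p^k)=p^{km/2}a_k$ into the recurrence gives $a_{k+1}=2\cos\theta_p\cdot a_k-a_{k-1}$ with $a_0=1$, so $a_k=U_k(\cos\theta_p)$, the Chebyshev polynomial of the second kind. Thus $\tilde\varrho(n):=\varrho(n)/n^{(m+2)/2}$ is multiplicative with local values $U_k(\cos\theta_p)/p^k$. The generating identity $\sum_{k\ge 0}U_k(\cos\theta)t^k=(1-2t\cos\theta+t^2)^{-1}$ makes the local Euler factor
\[
L_p:=\sum_{k\ge 0}\frac{U_k(\cos\theta_p)}{p^k}=\frac{p^2}{|p-e^{i\theta_p}|^2}>0,
\]
and since $p^2-2p\cos\theta_p+1\le(p+1)^2$ we have the deterministic bound $L_p\ge(p/(p+1))^2$. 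Setting $y=x^{1/(C\ln_2 x)}$ for a large constant $C$, the truncated Euler product $P_y:=\prod_{p\le y}L_p=\sum_{P^+(n)\le y}\tilde\varrho(n)$ is strictly positive and satisfies $P_y\gg 1/(\ln x)^2$ for every realization of the $\theta_p$'s.

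Decompose $I_x^{(m)}=P_y-R_y+E_y$, where $R_y=\sum_{n>x,\,P^+(n)\le y}\tilde\varrho(n)$ is the smooth tail and $E_y=\sum_{n\le x,\,P^+(n)>y}\tilde\varrho(n)$ is the rough correction. Because $P_y\gg 1/(\ln x)^2$ pointwise, $\mathbb{P}(I_x^{(m)}<0)\le\mathbb{P}(|R_y|+|E_y|\gg 1/(\ln x)^2)$ and it suffices to bound this by $\exp(-\exp(\ln x/(C\ln_2x)))$. The main input is Sato--Tate orthogonality of Chebyshev $U_\ell$'s, namely $\mathbb{E}[U_i(\cos\theta_p)U_j(\cos\theta_p)]=\delta_{ij}$. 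Combined with independence across primes, this gives, for deterministic $|a_n|\le 1$,
\[
\mathbb{E}\Bigl[\Bigl(\sum_n a_n\tilde\varrho(n)\Bigr)^{2k}\Bigr]=\sum_{n_1,\dots,n_{2k}}\frac{a_{n_1}\cdots a_{n_{2k}}}{n_1\cdots n_{2k}}\prod_p\mathbb{E}\Bigl[\prod_{i=1}^{2k}U_{k_p(n_i)}(\cos\theta_p)\Bigr],
\]
and iterating Pieri's rule $U_iU_j=\sum_\ell U_\ell$ shows the inner expectation vanishes unless $\sum_i k_p(n_i)$ is even (so $n_1\cdots n_{2k}=u^2$) and is bounded by $\prod_i(k_p(n_i)+1)$. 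Taking products over $p$ and using the classical identity $\sum_{n_1\cdots n_{2k}=N}\prod_i d(n_i)=d_{4k}(N)$ (each $d(n_i)$ counts a further 2-factorization, so together they enumerate $4k$-fold factorizations of $N$), the moment is dominated by $\sum_{u\in\mathcal{U}}d_{4k}(u^2)/u^2$ over a set $\mathcal{U}$ fixed by the support of $\{a_n\}$.

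To finish, I apply Lemma~\ref{Lemma 2.4} with $k$ replaced by $2k$ and $q=1$. For $R_y$ the constraint $n_i>x$ forces $u>x^k$ directly, so $\mathbb{E}[R_y^{2k}]\ll\exp(-Dk\ln_2 x)$. For $E_y$ the constraint $P^+(n_i)>y$ forces $u$ to carry at least $k$ prime factors exceeding $y$, hence $u>y^k$; the same Rankin-trick computation as in Lemma~\ref{Lemma 2.4}, but with the lower cutoff $y^k$ in place of $x^k$, yields $\mathbb{E}[E_y^{2k}]\ll\exp(-Dk\ln_2 x)$ as well (with a slightly smaller $D$). Markov's inequality then gives
\[
\mathbb{P}\bigl(|R_y|+|E_y|\gg 1/(\ln x)^2\bigr)\le(\ln x)^{O(k)}e^{-Dk\ln_2 x}=e^{-k\ln_2 x(D-o(1))},
\]
and choosing $2k\asymp\exp(\ln x/(C\ln_2 x))$ (admissible in Lemma~\ref{Lemma 2.4} once $C$ is large) produces exactly the bound $\exp(-\exp(\ln x/(C\ln_2 x)))$. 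The main obstacle will be the moment estimate for $E_y$: unlike the smooth tail $R_y$, the size constraint $n_i\le x$ fails to make $u$ large in the sense required by Lemma~\ref{Lemma 2.4}, and one must instead exploit the large-prime constraint to put $u>y^k$, then carefully split the Euler product into small primes $p\le 2k^{2/\sigma}$ (treated crudely) and large primes (treated via $(1\pm p^{-\sigma/2})^{-4k}$ Taylor expansions) in order to keep the overall overhead at the harmless level $\exp(O(k\ln_2 x))$.
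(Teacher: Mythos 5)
Your proposal does not address the statement at all. The statement to be proved is Hoeffding's inequality: for independent Rademacher variables $X_k$ and square-summable real coefficients $a_k$, the tail bound
\[
\mathbb{P}\left(\sum_{k=1}^{\infty}a_{k}X_{k}\geq\lambda\right)\leq\exp\left(-\frac{\lambda^{2}}{2\sum_{k=1}^{\infty}a_{k}^{2}}\right).
\]
What you have written instead is a proof sketch for Theorem \ref{Theorem 1.3} (the Sato--Tate/Ramanujan-type partial sums $I_x^{(m)}$), built on Chebyshev polynomials of the second kind, Euler product truncations, and moment bounds via Lemma \ref{Lemma 2.4}. Nothing in your argument produces a sub-Gaussian tail estimate for a weighted sum of independent $\pm 1$ variables; the quantities $P_y$, $R_y$, $E_y$ and the orthogonality of the $U_\ell$ under the Sato--Tate measure are simply irrelevant to the claim.

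For the record, the paper does not prove this lemma either: it is quoted as a known result from the cited reference. If you did want to supply a proof, the standard route is the exponential moment method: for $t>0$, independence gives $\mathbb{E}\bigl[e^{t\sum a_k X_k}\bigr]=\prod_k\cosh(ta_k)\le\prod_k e^{t^2a_k^2/2}=e^{\frac{t^2}{2}\sum a_k^2}$, then Markov's inequality yields $\mathbb{P}(\sum a_kX_k\ge\lambda)\le e^{-t\lambda+\frac{t^2}{2}\sum a_k^2}$, and optimizing at $t=\lambda/\sum a_k^2$ gives the stated bound (with a routine limiting argument to pass from finite to infinite sums using $\sum a_k^2<\infty$). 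You should restart from that framework; your current draft proves a different theorem.
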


\section{Proof of Theorem \ref{Theorem 1.2}}

In order to prove Theorem \ref{Theorem 1.2}, we first deduce two
auxiliary results.
\begin{prop}
\label{Proposition 5.1} Let $a$ and $m$ be two positive integers
with $\gcd(a,m)=1$ and, let $z$ be a real number and $\epsilon>0.$
Then, when $x$ is sufficiently large, there exists $\delta(\epsilon,z)>0$
such that the probability that the random variable 
\[
\sum_{\substack{p\equiv a(\bmod m)\\
p\leq x
}
}f(p)/p
\]
 falls within the real interval $(z-\epsilon,z+\epsilon)$ is greater
than $\delta(\epsilon,z)$.
\end{prop}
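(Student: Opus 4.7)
\bigskip

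\noindent\textbf{Proof proposal for Proposition \ref{Proposition 5.1}.} The plan is to split the sum into a finite ``head'' over small primes, over which we can freely choose signs, and an infinite ``tail'' whose variance is small. The head will be used to approximate the target $z$ to within $\epsilon/2$; the tail will stay within $\epsilon/2$ of $0$ with probability bounded away from $0$ by Chebyshev's inequality. Since the two pieces are independent and the number of ``head'' primes depends only on $\epsilon$ and $z$ (not on $x$), this will produce a lower bound $\delta(\epsilon,z)$ independent of $x$.

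First I would enumerate the primes in the arithmetic progression as $p_1<p_2<\cdots$ with $p_i\equiv a\pmod m$. By Dirichlet's theorem (in its quantitative form) the series $\sum_{p\equiv a(\bmod m)}1/p$ diverges while the individual terms tend to $0$. I would then run a greedy construction: define deterministic signs $\varepsilon_i\in\{-1,+1\}$ by setting $\varepsilon_i=\mathrm{sign}(z-s_{i-1})$ with $s_i=\sum_{j\le i}\varepsilon_j/p_j$. The divergence of $\sum 1/p_i$ rules out the possibility that all signs eventually agree, and whenever consecutive partial sums straddle $z$ one has $|s_i-z|<1/p_i$. Hence there is an index $N_0=N_0(\epsilon,z)$ with $|s_{N_0}-z|<\epsilon/2$. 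Let $P_1=\{p_1,\ldots,p_{N_0}\}$.

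Next, taking $x$ sufficiently large to exceed $p_{N_0}$, I would split
\[
\sum_{\substack{p\equiv a(\bmod m)\\ p\le x}}\frac{f(p)}{p}=H+T,\qquad H=\sum_{p\in P_1}\frac{f(p)}{p},\quad T=\sum_{\substack{p\equiv a(\bmod m)\\ p_{N_0}<p\le x}}\frac{f(p)}{p},
\]
where $H$ and $T$ are independent because they involve disjoint sets of primes. The event $\{f(p)=\varepsilon_p\text{ for all }p\in P_1\}$ has probability $2^{-N_0}$ and forces $|H-z|<\epsilon/2$. For the tail, since $\mathbb E[f(p)]=0$ and $\mathrm{Var}(f(p))=1$, Chebyshev gives
\[
\mathbb P\bigl(|T|\ge\epsilon/2\bigr)\le\frac{4}{\epsilon^2}\sum_{p>p_{N_0}}\frac{1}{p^2},
\]
and by enlarging $N_0$ if necessary (which only increases the exponent in the first bound) this quantity can be made at most $1/2$. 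By independence, the probability that the full sum lies in $(z-\epsilon,z+\epsilon)$ is therefore at least $2^{-N_0-1}$, a positive constant depending only on $\epsilon$ and $z$.

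The main obstacle I anticipate is not technical but conceptual: ensuring that the greedy approximation of $z$ can be accomplished using only primes in the single arithmetic progression $a\pmod m$. This is where invoking Dirichlet's theorem (and the density/divergence of reciprocals of primes in that progression) is essential, together with the observation that $1/p_i\to 0$ so that the greedy overshoot shrinks to zero. Once this point is secured, combining the deterministic head with the Chebyshev bound on the tail is routine, and the final constant $\delta(\epsilon,z)$ is manifestly independent of $x$.
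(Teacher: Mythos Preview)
Your argument is correct and follows essentially the same head/tail strategy as the paper: fix deterministic signs on finitely many small primes in the progression to land within $\epsilon/2$ of $z$, then control the remaining tail and combine by independence. The only differences are cosmetic---the paper builds the head signs in alternating blocks rather than greedily, and invokes Hoeffding's inequality (Lemma~\ref{Lemma 2.3}) for the tail where you use Chebyshev, which is weaker but entirely sufficient here.
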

\begin{proof}
We arrange all primes satisfying $p\equiv a(\bmod m)$ in ascending
order as $\ensuremath{\{p_{1},p_{2},p_{3},\dots\}}.$ As the proof
of the case $z<0$ is similar to that of the case $z\geq0,$ we only
consider the case $z\geq0.$ Let us assume $\ensuremath{z\geq0}.$
We select $M_{0}\in\mathbb{Z}$ such that 
\[
\sum_{\substack{p\equiv a(\bmod m)\\
p\leq p_{M_{0}}
}
}\frac{1}{p}>z
\]
and
\[
\sum_{\substack{p\equiv a(\bmod m)\\
p\leq p_{M_{0}-1}
}
}\frac{1}{p}\leq z.
\]
Then we choose $M_{1}>M_{0}$ such that 
\[
\sum_{\substack{p\equiv a(\bmod m)\\
p\leq p_{M_{0}}
}
}\frac{1}{p}-\sum_{\substack{p\equiv a(\bmod m)\\
p_{M_{0}}<p\leq p_{M_{1}}
}
}\frac{1}{p}<z,
\]
 but 
\[
\sum_{\substack{p\equiv a(\bmod m)\\
p\leq p_{M_{0}}
}
}\frac{1}{p}-\sum_{\substack{p\equiv a(\bmod m)\\
p_{M_{0}}<p\leq p_{M_{1}-1}
}
}\frac{1}{p}\geq z.
\]
We, in a similar manner, continue to construct $M_{2},M_{3},\cdots.$
And we take 

\begin{align*}
f(p) & =1,p\leq p_{M_{0}}\:\textrm{or}\:p_{M_{2k-1}}<p\leq p_{M_{2k}},\;k=1,2,3,\cdots,p\equiv a(\bmod m)\\
f(p) & =-1,p_{M_{2k}}<p\leq p_{M_{2k+1}},\;k=0,1,2,\cdots,p\equiv a(\bmod m)
\end{align*}
By iteratively constructing $\ensuremath{M_{0},M_{1},\cdots,}$ it
holds that

\[
\left|\sum_{\substack{p\equiv a(\bmod m)\\
p\leq p_{M_{i}}
}
}\frac{f(p)}{p}-z\right|<\frac{1}{p_{M_{i}}}.
\]
Thus, we guarantee that these exists a sufficiently large $N_{0}$
such that
\begin{align*}
2\exp\left(-\frac{\epsilon^{2}}{4\sum_{N_{0}<p}\frac{1}{p^{2}}}\right) & <1-\delta_{2}\\
\left|\sum_{\substack{p\equiv a\,(\bmod m)\\
p\leq N_{0}
}
}\frac{f(p)}{p}-z\right| & <\frac{\epsilon}{2}.
\end{align*}

We, for sufficiently large $x$ (i.e., $x>N_{0}$), now decompose
the sum $\sum_{\substack{p\equiv a(\bmod m)\\
p<x
}
}\frac{f(p)}{p}$ into two parts:
\begin{align*}
\sum_{\substack{p\equiv a(\bmod m)\\
p<x
}
}\frac{f(p)}{p} & =\sum_{\substack{p\equiv a\,(\bmod m)\\
p\leq N_{0}
}
}\frac{f(p)}{p}+\sum_{\substack{p\equiv a\,(\bmod m)\\
N_{0}<p<x
}
}\frac{f(p)}{p}\\
 & =E_{1}+E_{2}
\end{align*}
It is easily seen that 
\[
\mathbb{P}\left(\left|E_{1}-z\right|<\epsilon/2\right)>\delta_{1}
\]
for some positive number $\delta_{1}.$ 

Since
\[
\ensuremath{\mathbb{P}\left(\sum_{k=1}^{\infty}a_{k}X_{k}\geq\lambda\right)=\mathbb{P}\left(\sum_{k=1}^{\infty}a_{k}X_{k}\leq-\lambda\right)},
\]
we, by Lemma \ref{Lemma 2.4}, obtain
\[
\mathbb{P}\left(\left|\sum_{k=1}^{\infty}a_{k}X_{k}\right|\geq\lambda\right)\leq2\exp\left(-\frac{\lambda^{2}}{2\sum_{k=1}^{\infty}a_{k}^{2}}\right).
\]
Then
\begin{align*}
\mathbb{P}\left(\left|E_{2}\right|<\epsilon/2\right) & =1-\mathbb{P}\left(\left|E_{2}\right|\geq\epsilon/2\right)\\
 & >1-2\exp\left(-\frac{\epsilon^{2}}{4\sum_{N_{0}<p}\frac{1}{p^{2}}}\right)>\delta_{2}.
\end{align*}
 Noting the independence of the random variables $E_{1}$ and $\ensuremath{E_{2}},$
we get
\begin{align*}
\mathbb{P}\left(\left|\sum_{\substack{p\equiv a(\bmod m)\\
p<x
}
}\frac{f(p)}{p}-z\right|<\epsilon\right) & >\mathbb{P}\left(\left(\left|E_{1}-z\right|<\epsilon/2\right)\wedge\left(\left|E_{2}\right|<\epsilon/2\right)\right)\\
 & \geq\mathbb{P}\left(\left|E_{1}-z\right|<\epsilon/2\right)\mathbb{P}\left(\left|E_{2}\right|<\epsilon/2\right)\\
 & >\delta_{1}\delta_{2}.
\end{align*}
 This completes the proof.
\end{proof}
\begin{prop}
\label{Proposition 5.2} Assume that $\chi$ is a character modulo
$m$. Let $C_{\chi}$ be a constant depending on $\chi$ with $C_{\bar{\chi}}=\overline{C_{\chi}}.$
Assume $S$ is a non-empty subset of the reduced residue system $\left(\ensuremath{\mathbb{Z}/m\mathbb{Z}}\right)^{\times},$
and $\{\gamma_{a}\}_{a\in\left(\ensuremath{\mathbb{Z}/m\mathbb{Z}}\right)^{\times}}$
is a set of real variables. Then the multivariate function 
\[
F(\gamma_{1},\cdots,\gamma_{\varphi(m)}):=\frac{1}{\varphi(m)}\sum_{b\in S}\sum_{\chi\in\widehat{(\mathbb{Z}/m\mathbb{Z})^{\times}}}\overline{\chi}(b)\exp\left(\sum_{a\in\left(\ensuremath{\mathbb{Z}/m\mathbb{Z}}\right)^{\times}}\chi(a)\gamma_{a}+C_{\chi}\right)
\]
takes non-negative values if and only if the coefficients of the real
characters in the expansion of the characteristic function of $S$
are all non-negative, and the coefficients of the complex characters
are all zero. Otherwise, this function can take arbitrarily large
negative values.
\end{prop}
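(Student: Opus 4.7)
The plan is to rewrite $F$ in terms of the Fourier coefficients of the characteristic function $\mathbf{1}_S$ and then to use the freedom to prescribe the character-coordinate values of $\gamma$ in order to isolate a single character. Set
\[
c_\chi := \frac{1}{\varphi(m)} \sum_{b \in S} \overline{\chi}(b), \qquad L_\chi(\gamma) := \sum_{a \in (\mathbb{Z}/m\mathbb{Z})^{\times}} \chi(a)\, \gamma_a,
\]
so that $F(\gamma) = \sum_\chi c_\chi \exp(L_\chi(\gamma) + C_\chi)$, where the $c_\chi$ are exactly the coefficients in the character expansion $\mathbf{1}_S(a) = \sum_\chi c_\chi \chi(a)$. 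The symmetries $c_{\overline{\chi}} = \overline{c_\chi}$, $L_{\overline{\chi}} = \overline{L_\chi}$, and the given $C_{\overline{\chi}} = \overline{C_\chi}$ confirm that $F$ is real, and that each complex conjugate pair of characters contributes $2\operatorname{Re}\!\bigl(c_\chi \exp(L_\chi + C_\chi)\bigr)$. By orthogonality of characters the $\mathbb{R}$-linear map $\gamma \mapsto (L_\chi(\gamma))_\chi$ is a bijection from $\mathbb{R}^{\varphi(m)}$ onto the real subspace of $\mathbb{C}^{\varphi(m)}$ cut out by $L_{\overline{\chi}} = \overline{L_\chi}$; hence one may freely prescribe any collection $(L_\chi)$ respecting those conjugation constraints.

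Sufficiency is then immediate: under the stated hypothesis only real characters contribute, and each surviving term $c_\chi \exp(L_\chi + C_\chi)$ is the product of a non-negative real coefficient with a positive real exponential, so $F \geq 0$. For necessity, assume the hypothesis fails. If some complex character $\chi_0$ satisfies $c_{\chi_0} \neq 0$, write $c_{\chi_0} e^{C_{\chi_0}} = r e^{i\phi}$ with $r > 0$, and for a large parameter $T > 0$ prescribe $L_{\chi_0}(\gamma) = T + i(\pi - \phi)$ (forcing $L_{\overline{\chi_0}}(\gamma) = T - i(\pi - \phi)$) together with $L_\chi(\gamma) = -T^2$ for every other character. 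The pair $\{\chi_0, \overline{\chi_0}\}$ then contributes exactly $-2 r e^T$, while every remaining summand is bounded in modulus by $O(e^{-T^2})$; hence $F \to -\infty$ as $T \to \infty$. In the remaining case, all complex coefficients vanish but some real character $\chi_1$ satisfies $c_{\chi_1} < 0$, and the same strategy with $L_{\chi_1}(\gamma) = T$ and $L_\chi(\gamma) = -T^2$ elsewhere drives $F$ to $-\infty$.

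The main obstacle, modest as it is, lies in the reverse direction and amounts to two bookkeeping checks: first, that the prescribed complex values $(L_\chi)$ are genuinely realised by an honest vector $\gamma \in \mathbb{R}^{\varphi(m)}$, which is just the invertibility of the character matrix combined with the preservation of the conjugation symmetry; and second, that the subdominant summands really are negligible compared with the dominant one, which follows because $|c_\chi|$ and $\operatorname{Re}(C_\chi)$ are bounded independently of $T$. Once these are secured, the algebraic separation of a single character's contribution carries the proof, and $F$ is driven to $-\infty$ along the constructed one-parameter family whenever the hypothesis fails, which is in particular much stronger than merely being able to take a negative value.
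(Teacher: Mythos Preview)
Your proof is correct and follows essentially the same approach as the paper: both reduce the problem to the invertibility of the character matrix (orthogonality), use this to freely prescribe the ``character-coordinates'' $L_\chi$ subject to the conjugation constraint, and then isolate a single offending character to force $F$ negative. The only difference is presentational: the paper splits the sum explicitly into real characters and conjugate pairs, rewriting the latter via $\cos$, whereas you keep the complex-exponential form and invoke the conjugation symmetry directly; your explicit one-parameter family with $L_{\chi_0}=T+i(\pi-\phi)$ and $L_\chi=-T^2$ elsewhere makes the ``arbitrarily large negative values'' clause more transparent than the paper's terse treatment.
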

\begin{proof}
We rewrite the summation in the expression of $F(\gamma_{1},\cdots,\gamma_{\varphi(m)})$
as two sums:

\begin{align*}
F(\gamma_{1},\cdots,\gamma_{\varphi(m)}) & =\sum_{\chi\,\text{real}}b_{\chi}\exp\left(\sum_{a\in\left(\ensuremath{\mathbb{Z}/m\mathbb{Z}}\right)^{\times}}\chi(a)\gamma_{a}\right)\\
 & +\sum_{\overline{\chi}\,\text{complex}}2|b_{\chi}|\exp\left(\sum_{a\in\left(\ensuremath{\mathbb{Z}/m\mathbb{Z}}\right)^{\times}}\chi^{(1)}(a)\gamma_{a}\right)\cos\left(\sum_{a\in\left(\ensuremath{\mathbb{Z}/m\mathbb{Z}}\right)^{\times}}\chi^{(2)}(a)\gamma_{a}+\theta_{\chi}\right),
\end{align*}
where
\begin{align*}
b_{\chi} & =\sum_{b\in S}\overline{\chi}(b)\exp\left(C_{\chi}\right),\theta_{\chi}=\text{arg}(b_{\chi})\\
\chi^{(1)}(a) & =\text{Re}(\chi(a)),\quad\chi^{(2)}(a)=\text{Im}(\chi(a)).
\end{align*}

Define three linear transformations:

\begin{align}
\begin{alignedat}{1}t_{\chi} & =\sum_{a\in\left(\ensuremath{\mathbb{Z}/m\mathbb{Z}}\right)^{\times}}\chi(a)\gamma_{a},\quad\chi\,\text{real},\\
r_{\chi}^{(1)} & =\sum_{a\in\left(\ensuremath{\mathbb{Z}/m\mathbb{Z}}\right)^{\times}}\chi^{(1)}(a)\gamma_{a},\quad\chi\,\text{complex},\\
r_{\chi}^{(2)} & =\sum_{a\in\left(\ensuremath{\mathbb{Z}/m\mathbb{Z}}\right)^{\times}}\chi^{(2)}(a)\gamma_{a},\quad\chi\,\text{complex},
\end{alignedat}
\label{eq:4-1}
\end{align}
 we will prove that this is a full-rank linear transformation.

Suppose $\chi_{1},\cdots,\chi_{r_{1}}$ are real characters, and $\chi_{r_{1}+1},\cdots,\chi_{r_{1}+2r_{2}}$
are complex characters, satisfying $\chi_{r_{1}+j}=\overline{\chi}_{r_{1}+r_{2}+j}$
for $\ensuremath{j=1,\cdots,r_{2}}.$ Then, the determinant of the
matrix associated with the given linear transformation is proportional
to the determinant of the matrix 

\begin{equation}
\begin{pmatrix}\chi_{1}(1) & \cdots & \chi_{1}(m-1)\\
\vdots & \ddots & \vdots\\
\chi_{r_{1}+2r_{2}}(1) & \cdots & \chi_{r_{1}+2r_{2}}(m-1)
\end{pmatrix},\label{eq:4-3}
\end{equation}
differing only by a nonzero scalar factor. By the orthogonality relations
of characters, it follows that the determinant of the matrix in \eqref{eq:4-3}
is nonzero. Consequently, the linear transformation in \eqref{eq:4-1}
is full rank.

Thus, through this linear transformation, we obtain

\[
F(\gamma_{1},\cdots,\gamma_{m-1})=G(t_{\chi_{1}},\cdots,r_{\chi_{r_{1}+1}}^{(1)},\cdots,r_{\chi_{r_{1}+r_{2}}}^{(2)}),
\]
where 
\[
G(t_{\chi_{1}},\cdots,r_{\chi_{r_{1}+1}}^{(1)},\cdots,r_{\chi_{r_{1}+r_{2}}}^{(2)})=\sum_{\chi\,\text{real}}b_{\chi}\exp(t_{\chi})+\sum_{\overline{\chi}\,\text{complex}}2|b_{\chi}|\exp(r_{\chi}^{(1)})\cos\left(r_{\chi}^{(2)}+\theta_{\chi}\right).
\]
Thus, the function $G\geq0$ if and only if all $|b_{\chi}|=0$ for
all complex characters $\ensuremath{\chi}.$ This is equivalent to
the condition that for all complex characters $\ensuremath{\chi},$
we have $\ensuremath{\sum_{b\in S}\overline{\chi}(b)=0},$ and for
all real characters $\chi$, we have $\ensuremath{b_{\chi}>0}.$

Note that the expansion of the characteristic function of $S$ is

\[
1_{S}(x)=\frac{1}{\varphi(m)}\sum_{\chi\in\widehat{(\mathbb{Z}/m\mathbb{Z})^{\times}}}\left(\sum_{a\in S}\overline{\chi}(a)\right)\chi(x).
\]
Therefore, $\ensuremath{F\geq0}$ is equivalent to that real character
coefficients in the characteristic function of $\ensuremath{S}$ are
all nonnagetive and all complex character coefficients are vanishing.
\end{proof}
We are now ready to prove Theorem \ref{Theorem 1.2}.

\noindent{\it Proof of Theorem \ref{Theorem 1.2}.} If the condition
that the coefficients of the real characters in the expansion of the
characteristic function of $S$ according to the characters of $(\mathbb{Z}/m\mathbb{Z})^{\times}$
are all non-negative, and the coefficients of the complex characters
are all zero is not satisfied, then, using characteristic function
decomposition:

\[
1_{A}(x)=\frac{1}{\varphi(m)}\sum_{a\in S}\sum_{\chi\in\widehat{\left(\ensuremath{\mathbb{Z}/m\mathbb{Z}}\right)^{\times}}}\overline{\chi}(a)\chi(x),
\]
 we obtain

\begin{align*}
\sum_{n\leq x}\frac{1_{A}(n)f(n)}{n} & =\frac{1}{\varphi(m)}\sum_{n\leq x}\sum_{a\in S}\sum_{\chi\in\widehat{\left(\ensuremath{\mathbb{Z}/m\mathbb{Z}}\right)^{\times}}}\frac{\overline{\chi}(a)\chi(n)f(n)}{n}\\
 & =\frac{1}{\varphi(m)}\sum_{a\in S}\sum_{\chi\in\widehat{\left(\ensuremath{\mathbb{Z}/m\mathbb{Z}}\right)^{\times}}}\sum_{n\leq x}\frac{\overline{\chi}(a)\chi(n)f(n)}{n}\\
 & =\frac{1}{\varphi(m)}\sum_{a\in S}\sum_{\chi\in\widehat{\left(\ensuremath{\mathbb{Z}/m\mathbb{Z}}\right)^{\times}}}\overline{\chi}(a)\left(\prod_{p\leq x}\left(1-\frac{\chi(p)f(p)}{p}\right)^{-1}-\sum_{\substack{n>x\\
P(n)<x
}
}\frac{\chi(n)f(n)}{n}\right)\\
 & =:U_{1}-U_{2}.
\end{align*}

We now prove that for sufficiently large $\ensuremath{x},$ there
exists a positive constant $\delta$ independent of $x$ such that

\[
P(U_{1}<-1)>\delta
\]
and 
\[
P\left(|U_{2}|<\frac{1}{\ln x}\right)>1-O\left(\exp\left(-\exp\left(\frac{\ln x}{C\ln^{2}x}\right)\right)\right),
\]
where $C$ is a positive constant.

In fact, after basic algebraic manipulation, we get:

\begin{align*}
U_{1} & =\frac{1}{\varphi(m)}\sum_{b\in S}\sum_{\chi\in\widehat{\left(\ensuremath{\mathbb{Z}/m\mathbb{Z}}\right)^{\times}}}\overline{\chi}(b)\prod_{p\leq x}\left(1-\frac{\chi(p)f(p)}{p}\right)^{-1}\\
 & =\frac{1}{\varphi(m)}\sum_{b\in S}\sum_{\chi\in\widehat{\left(\ensuremath{\mathbb{Z}/m\mathbb{Z}}\right)^{\times}}}\overline{\chi}(b)\exp\left(z_{\chi}(x)+C_{\chi}+e_{\chi}(x)\right)
\end{align*}
where

\begin{align*}
z_{\chi}(x) & :=\sum_{a\in\left(\ensuremath{\mathbb{Z}/m\mathbb{Z}}\right)^{\times}}\chi(a)\gamma_{a}(x),\\
\gamma_{a}(x) & :=\sum_{\substack{p\equiv a\bmod m\\
p\leq x
}
}\frac{f(p)}{p},\\
C_{\chi} & :=\sum_{p}\ln\left(1-\frac{\chi(p)f(p)}{p}\right)^{-1}-\frac{\chi(p)f(p)}{p},\\
e_{\chi}(x) & :=\sum_{p\geq x}\ln\left(1-\frac{\chi(p)f(p)}{p}\right)^{-1}-\frac{\chi(p)f(p)}{p}.
\end{align*}

Define

\[
U_{1}^{\prime}:=\frac{1}{\varphi(m)}\sum_{b\in S}\sum_{\chi\in\widehat{\left(\ensuremath{\mathbb{Z}/m\mathbb{Z}}\right)^{\times}}}\overline{\chi}(b)\exp\left(z_{\chi}(x)+C_{\chi}\right).
\]
By Proposition \ref{Proposition 5.2}, there exists an interval $I_{a}=(z_{a}-\epsilon_{a},z_{a}+\epsilon_{a})$
independent of $x$ such that if $\ensuremath{\gamma_{a}(x)\in I_{a}},$
then $\ensuremath{U_{1}^{\prime}<-2}.$ Noting that the random variables
$\gamma_{a}(x)$ are independent, we have the probabilistic inequality:

\[
\mathbb{P}(U_{1}^{\prime}<-2)>\prod_{a\in\left(\ensuremath{\mathbb{Z}/m\mathbb{Z}}\right)^{\times}}\mathbb{P}(\gamma_{a}(x)\in I_{a}).
\]
By Proposition \ref{Proposition 5.1}, there exists a positive constant
$\delta_{a}$ independent of $x$ such that

\[
\mathbb{P}(\gamma_{a}(x)\in I_{a})>\delta_{a}.
\]
Thus

\[
\mathbb{P}(U_{1}^{\prime}<-2)>\prod_{a\in\left(\ensuremath{\mathbb{Z}/m\mathbb{Z}}\right)^{\times}}\delta_{a}.
\]
Since $e_{\chi}(x)$ becomes sufficiently small as $x$ grows large,
we obtain that, for sufficiently large $x,$

\[
\mathbb{P}(U_{1}<-1)>\mathbb{P}(U_{1}^{\prime}<-2)>\prod_{a\in\left(\ensuremath{\mathbb{Z}/m\mathbb{Z}}\right)^{\times}}\delta_{a}.
\]

On the other hand, note that

\[
|U_{2}|<\frac{1}{\varphi(m)}\sum_{a\in S}\sum_{\chi\in\widehat{\left(\ensuremath{\mathbb{Z}/m\mathbb{Z}}\right)^{\times}}}U'_{2,\chi}
\]
with

\[
U'_{2,\chi}=\left|\sum_{\substack{n>x\\
P(n)<x
}
}\frac{\chi(n)f(n)}{n}\right|.
\]

Let $\ensuremath{k=x^{\frac{1}{D\ln_{2}x}},\sigma=2-\frac{D\ln_{2}x}{\ln x},R=2k^{2/\sigma}}.$
Then
\begin{align*}
\mathbb{E}\left(\left(U_{2,\chi}^{\prime}\right)^{2k}\right) & =\mathbb{E}\left(\sum_{\substack{n_{i}>x\\
P(n_{i})<x
}
}\frac{f(\prod_{i=1}^{2k}n_{i})\prod_{i=1}^{k}\chi(n_{i})\overline{\chi(n_{k+i})}}{\prod_{i=1}^{2k}n_{i}}\right)\\
 & =\sum_{\substack{n_{i}>x\\
P(n_{i})<x\\
\prod_{i=1}^{2k}n_{i}\in\mathbb{Z}^{2}
}
}\frac{f(\prod_{i=1}^{2k}n_{i})\prod_{i=1}^{k}\chi(n_{i})\overline{\chi(n_{k+i})}}{\prod_{i=1}^{2k}n_{i}}\\
 & <\sum_{\substack{n_{i}>x\\
P(n_{i})<x\\
\prod_{i=1}^{2k}n_{i}\in\mathbb{Z}^{2}
}
}\frac{1}{\prod_{i=1}^{2k}n_{i}}<\sum_{\substack{m>x^{k}\\
P(m)<x
}
}\frac{d_{2k}(m^{2})}{m^{2}}.
\end{align*}

By using Lemma \ref{Lemma 2.5}, we get

\[
\mathbb{E}\left(\left(U_{2,\chi}^{\prime}\right)^{2k}\right)<\sum_{\substack{m>x^{k}\\
P(m)<x
}
}\frac{d_{2k}(m^{2})}{m^{2}}<\frac{1}{x^{k(2-\sigma)}}\exp\left(kO(\ln_{2}x)\right).
\]
Applying Markov's inequality, we deduce that, for sufficiently large
$D$ and $\ensuremath{x},$

\begin{align*}
\mathbb{P}(U_{2,\chi}^{\prime} & >\frac{1}{\varphi(m)\ln x})=\mathbb{P}(\left(U_{2,\chi}^{\prime}\right)^{2k}>\left(\frac{1}{\varphi(m)\ln x}\right)^{2k})<\frac{\mathbb{E}\left(\left(U_{2,\chi}^{\prime}\right)^{2k}\right)}{\left(\frac{1}{\varphi(m)\ln x}\right)^{2k}}\\
 & <\exp\left(-\frac{D}{5}\ln_{2}x\exp\left(\frac{\ln x}{D\ln_{2}x}\right)\right).
\end{align*}
Therefore,

\[
\mathbb{P}\left(|U_{2}|<\frac{1}{\ln x}\right)>\mathbb{P}\left(\bigcap_{\chi}U_{2,\chi}^{\prime}<\frac{1}{\varphi(m)\ln x}\right)>1-O\left(\exp\left(-\exp\left(\frac{\ln x}{C\ln_{2}x}\right)\right)\right),
\]
 where $C$ is a positive constant. This implies that, for sufficiently
large $\ensuremath{x},$

\[
\mathbb{P}\left(\sum_{n\leq x}\frac{1_{A}(n)f(n)}{n}<0\right)>\mathbb{P}(U_{1}<-1)+\mathbb{P}\left(|U_{2}|<\frac{1}{\ln x}\right)-1>\frac{1}{2}\prod_{a\in(\mathbb{Z}/m\mathbb{Z})^{\times}}\delta_{a}.
\]

If the condition is satisfied, then, using the characteristic function
decomposition:

\[
1_{A}(x)=\sum_{\chi\in\widehat{\left(\ensuremath{\mathbb{Z}/m\mathbb{Z}}\right)^{\times}}}C_{\chi}\chi(x),
\]
where $C_{\chi}\geq0$ and not all of the $C_{\chi}$ are 0, we arrive
at

\[
\sum_{n\leq x}\frac{1_{A}(n)f(n)}{n}=\sum_{\chi\in\widehat{\left(\ensuremath{\mathbb{Z}/m\mathbb{Z}}\right)^{\times}}}C_{\chi}\sum_{n\leq x}\frac{\chi(n)f(n)}{n}
\]
Spliting the sum $\sum_{n\leq x}\frac{\chi(n)f(n)}{n}$ , we get

\begin{align*}
\sum_{n\leq x}\frac{\chi(n)f(n)}{n} & =\prod_{\substack{p\leq x\\
(p,m)=1
}
}\left(1-\frac{f_{\chi}(p)}{p}\right)^{-1}-\sum_{\substack{P(n)\leq x\\
n>x,(n,m)=1
}
}\frac{\chi(n)f(n)}{n}\\
 & =:F_{1}-F_{2}
\end{align*}

Applying the same trick as in the proofs of Propositions 3.1 and 3.2
we can deduce that

\[
\mathbb{P}(F_{1}<\delta)<\textrm{\ensuremath{\delta^{k}}}\exp(kC\ln_{2}x),
\]
and
\[
\mathbb{P}(F_{2}>\delta)<\textrm{\ensuremath{\delta^{k}}}\exp(kC\ln_{2}x),
\]
where $C>0,\delta=\left(\frac{\ln_{2}x}{\ln x}\right)^{2C},k=x^{\frac{1}{\ln_{2}x}}$.
By the basic probability inequality:

\[
\mathbb{P}(F_{1}-F_{2}\geq0)\geq\mathbb{P}\left((F_{1}\geq\delta)\land(F_{2}\leq\delta)\right)\geq\mathbb{P}(F_{1}\geq\delta)+\mathbb{P}(F_{2}\leq\delta)-1
\]
we have

\begin{align*}
\mathbb{P}\left(\sum_{n\leq x}\frac{\chi(n)f(n)}{n}\geq0\right) & =1-\exp\left(-O\left(\exp\left(\frac{\ln x}{C\ln_{2}x}\right)\right)\right).
\end{align*}
This implies that

\begin{align*}
\mathbb{P}\left(\sum_{\chi\in\widehat{\left(\ensuremath{\mathbb{Z}/m\mathbb{Z}}\right)^{\times}}}C_{\chi}\sum_{n\leq x}\frac{\chi(n)f(n)}{n}\geq0\right) & \geq\prod_{\chi\in\widehat{\left(\ensuremath{\mathbb{Z}/m\mathbb{Z}}\right)^{\times}}}\mathbb{P}\left(\sum_{n\leq x}\frac{\chi(n)f(n)}{n}\geq0\right)\\
 & \geq\prod_{\chi\in\widehat{\left(\ensuremath{\mathbb{Z}/m\mathbb{Z}}\right)^{\times}}}\left[1-\exp\left(-O\left(\exp\left(\frac{\ln x}{C\ln_{2}x}\right)\right)\right)\right]\\
 & =1-\exp\left(-O\left(\exp\left(\frac{\ln x}{C\ln_{2}x}\right)\right)\right).
\end{align*}
This concludes the proof of Theorem \ref{Theorem 1.2}. \qed

\section{Proof of Theorem \ref{Theorem 1.1}}

To prove Theorem \ref{Theorem 1.1}, we first show two auxiliary results.
\begin{prop}
\label{p3.1} Let $\delta$ and $k$ be two positive constants. For
$K_{n}=Q(\zeta_{n}),$ we defined that

\[
Y_{x,K_{n}}=\prod_{N(\mathfrak{p})\leq x}\left(1-\frac{f(\mathfrak{p})}{N(\mathfrak{p})}\right)^{-1}
\]
Then for $n<(\ln x)^{A},k\leq x^{\frac{1}{\ln_{2}x}}$ with $A>0,$
we have

\[
\mathbb{P}(Y_{x,K_{n}}<\delta)=\textrm{\ensuremath{\delta^{k}}}\exp(kC(A)\ln_{2}x),
\]
where $C(A)$ is a positive constant depending on $A.$
\end{prop}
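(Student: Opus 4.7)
The plan is to estimate $\mathbb{P}(Y_{x,K_{n}}<\delta)$ by bounding a negative moment of $Y_{x,K_{n}}$ via Markov's inequality. Since $N(\mathfrak{p})\geq 2$ for every prime ideal $\mathfrak{p}$, each factor $(1-f(\mathfrak{p})/N(\mathfrak{p}))^{-1}$ is strictly positive, so $Y_{x,K_{n}}$ is an almost surely positive random variable. Applying Markov's inequality to $Y_{x,K_{n}}^{-k}$ gives
\[
\mathbb{P}(Y_{x,K_{n}}<\delta)=\mathbb{P}(Y_{x,K_{n}}^{-k}>\delta^{-k})\leq \delta^{k}\,\mathbb{E}[Y_{x,K_{n}}^{-k}],
\]
reducing the problem to controlling this single negative moment.

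Next I would expand $Y_{x,K_{n}}^{-1}=\prod_{N(\mathfrak{p})\leq x}(1-f(\mathfrak{p})/N(\mathfrak{p}))$ as a product of independent factors. Since the values $f(\mathfrak{p})$ at distinct prime ideals are independent Rademacher variables,
\[
\mathbb{E}[Y_{x,K_{n}}^{-k}]=\prod_{N(\mathfrak{p})\leq x}\frac{(1-1/N(\mathfrak{p}))^{k}+(1+1/N(\mathfrak{p}))^{k}}{2}.
\]
Each factor is bounded crudely by $(1+1/N(\mathfrak{p}))^{k}\leq\exp(k/N(\mathfrak{p}))$, giving
\[
\mathbb{E}[Y_{x,K_{n}}^{-k}]\leq \exp\Big(k\sum_{N(\mathfrak{p})\leq x}\frac{1}{N(\mathfrak{p})}\Big).
\]
The whole estimate is thereby reduced to a Mertens-type bound for the prime ideals of $K_{n}$.

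The third step is to prove that, under the hypothesis $n<(\ln x)^{A}$,
\[
\sum_{N(\mathfrak{p})\leq x}\frac{1}{N(\mathfrak{p})}\ll_{A}\ln_{2}x.
\]
I would follow the same three-case partition used in Lemma \ref{Lemma 2.5}, splitting the sum according to whether (i) $(\mathfrak{p},n)=1$ with $f_{\mathfrak{p}}=1$, (ii) $\mathfrak{p}\mid n$, or (iii) $(\mathfrak{p},n)=1$ with $f_{\mathfrak{p}}>1$. Lemma \ref{Lemma 2.1} rewrites each piece as a sum over rational primes (lying in an arithmetic progression modulo $n$, dividing $n$, or having higher order modulo $n/p^{v_{p}}$), and Lemma \ref{Lemma 2.2} (Brun-Titchmarsh) controls each of these sums. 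The computation is essentially the $\sigma=2$ specialization of the estimates already carried out inside the proof of Lemma \ref{Lemma 2.5}, so each of the three pieces contributes $\ll_{A}\ln_{2}x$. Combining with the Markov estimate from the first step then yields the claimed bound $\mathbb{P}(Y_{x,K_{n}}<\delta)\leq\delta^{k}\exp(kC(A)\ln_{2}x)$.

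The main obstacle lies in this third step: ensuring that the error terms in the Mertens-type sum remain under control uniformly in the range $n<(\ln x)^{A}$. The main contribution from split primes $p\equiv1\pmod n$ is of the expected order $\ln_{2}x$, but showing that the Brun-Titchmarsh error is absorbed even when $\varphi(n)$ can be as large as $(\ln x)^{A}$ requires breaking the summation range around $m=n^{2}$, exactly as in the proof of Lemma \ref{Lemma 2.5}; this mechanism is precisely what forces $C(A)$ to depend on $A$. A secondary subtlety is that the inert-prime contribution in case (iii) relies on $p^{f_{p}}>n/p^{v_{p}}$ to make the tail summable, mirroring the inert-prime analysis already present in Lemma \ref{Lemma 2.5}.
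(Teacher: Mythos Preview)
Your proposal is correct and follows essentially the same route as the paper: Markov's inequality applied to $Y_{x,K_{n}}^{-k}$, the product formula for $\mathbb{E}[Y_{x,K_{n}}^{-k}]$ coming from independence of the $f(\mathfrak{p})$, and then the three-case splitting of prime ideals (degree-one unramified, ramified, higher-degree unramified) controlled via Lemmas~\ref{Lemma 2.1} and~\ref{Lemma 2.2}. The only cosmetic difference is that the paper invokes Lemma~\ref{Lemma 2.5} directly on the product $Z_{1}Z_{2}Z_{3}$, whereas you first insert the crude bound $(1+1/N(\mathfrak{p}))^{k}\leq\exp(k/N(\mathfrak{p}))$ to reduce to the Mertens-type sum $\sum_{N(\mathfrak{p})\leq x}1/N(\mathfrak{p})$ and then recognize this as the $\sigma=2$ case of the computations inside Lemma~\ref{Lemma 2.5}; the underlying estimates are identical.
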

\begin{proof}
Notice that

\begin{align*}
\mathbb{E}(Y_{x,K_{n}}^{-k}) & =\prod_{N(\mathfrak{p})\leq x}\frac{(1-\frac{1}{N(\mathfrak{p})})^{k}+(1+\frac{1}{N(\mathfrak{p})})^{k}}{2}<Z_{1}Z_{2}Z_{3}.
\end{align*}
Using Lemma \ref{Lemma 2.5}, we obtain

\[
\mathbb{E}(Y_{x,K_{n}}^{-k})<\exp(kO_{A}(\ln_{2}x)).
\]
Applying Markov's inequality we get that

\[
\mathbb{P}(Y_{x,K_{n}}<\delta)=\mathbb{P}(Y_{x,K_{n}}^{-k}>\delta^{-k})<\delta^{k}\mathbb{E}(Y_{x,K_{n}}^{-k})<\delta^{k}\exp(kO_{A}(\ln_{2}x)).
\]
This completes the proof.
\end{proof}
\begin{cor}
\label{C4.1} For $\delta=\left(\frac{\ln_{2}x}{\ln x}\right)^{2C(A)},$
we have that

\[
\mathbb{P}(Y_{x,K_{n}}<\delta)<\exp\left(-\frac{C(A)}{2}\ln_{2}x\exp\left(\frac{\ln x}{\ln_{2}x}\right)\right).
\]
\end{cor}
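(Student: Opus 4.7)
The corollary is a direct corollary of Proposition \ref{p3.1}, obtained by making the optimal choice of the free parameter $k$. The plan is to take $k = x^{1/\ln_{2}x}$ (the largest admissible value in Proposition \ref{p3.1}) and verify that with $\delta = (\ln_{2}x/\ln x)^{2C(A)}$ the exponent in the bound $\delta^{k}\exp(kC(A)\ln_{2}x)$ collapses to the desired form.

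First I would rewrite the bound of Proposition \ref{p3.1} in exponential form:
\[
\mathbb{P}(Y_{x,K_{n}}<\delta) \leq \exp\bigl(k\ln\delta + kC(A)\ln_{2}x\bigr).
\]
With the prescribed choice of $\delta$ one computes
\[
\ln\delta = 2C(A)\bigl(\ln_{3}x - \ln_{2}x\bigr) \leq -\tfrac{3}{2}C(A)\ln_{2}x
\]
for all sufficiently large $x$, since $\ln_{3}x = o(\ln_{2}x)$. Substituting this into the previous display gives
\[
k\ln\delta + kC(A)\ln_{2}x \leq -\tfrac{3}{2}C(A)k\ln_{2}x + C(A)k\ln_{2}x = -\tfrac{1}{2}C(A)k\ln_{2}x.
\]

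Next I would substitute $k = x^{1/\ln_{2}x} = \exp(\ln x/\ln_{2}x)$, which is permitted by the hypothesis $k\leq x^{1/\ln_{2}x}$ of Proposition \ref{p3.1}. This yields
\[
\mathbb{P}(Y_{x,K_{n}}<\delta) \leq \exp\!\left(-\tfrac{C(A)}{2}\ln_{2}x\cdot\exp\!\left(\tfrac{\ln x}{\ln_{2}x}\right)\right),
\]
which is exactly the stated conclusion.

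There is essentially no obstacle here: once the right $k$ is chosen, the argument is a routine asymptotic manipulation, and the implicit constant $C(A)$ inherited from Proposition \ref{p3.1} is the same one that appears in the statement of the corollary. The only point to be a little careful about is ensuring that $x$ is large enough so that the absorption $\ln_{3}x - \ln_{2}x \leq -\tfrac{3}{4}\ln_{2}x$ holds and that $k = x^{1/\ln_{2}x}$ remains within the range $n < (\ln x)^{A}$ required for $C(A)$ to be valid; both are clear for $x$ sufficiently large, and for small $x$ the bound is trivial.
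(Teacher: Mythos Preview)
Your proof is correct and follows essentially the same approach as the paper: both take $k = x^{1/\ln_2 x}$ in Proposition~\ref{p3.1}, compute $\ln\delta = 2C(A)(\ln_3 x - \ln_2 x)$, and use $\ln_3 x = o(\ln_2 x)$ to absorb the lower-order term. One minor slip in your closing remark: the hypothesis $n < (\ln x)^A$ constrains the degree $n$ of the cyclotomic field, not the parameter $k$, so there is nothing to check there.
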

\begin{proof}
Taking $k=x^{\frac{1}{\ln_{2}x}}$ in Proposition \ref{p3.1}, we
derive that for large enough $x,$
\begin{align*}
\mathbb{P}(Y_{x,K_{n}}<\delta) & =\textrm{\ensuremath{\delta^{k}}}\exp(kC(A)\ln_{2}x)\\
 & =\exp\left(-C(A)\exp\left(\frac{\ln x}{\ln_{2}x}\right)\ln_{2}x+2C(A)\exp\left(\frac{\ln x}{\ln_{2}x}\right)\ln_{3}x\right)\\
 & <\exp\left(-\frac{C(A)}{2}\ln_{2}x\exp\left(\frac{\ln x}{\ln_{2}x}\right)\right).
\end{align*}
As desired.
\end{proof}
\begin{prop}
\label{Proposition 3.2} Let $\delta$ be a positive number. For $K_{n}=Q(\zeta_{n}),$
defined that

\[
Z_{x,K_{n}}:=\sum_{\substack{N(\mathfrak{a})>x\\
P(\mathfrak{a})\leq x
}
}\frac{f(\mathfrak{a})}{N(\mathfrak{a})}.
\]
Then, for $n<(\ln x)^{A}$ with $A>0,$ we have

\[
\mathbb{P}(Z_{x,K_{n}}>\delta)<\frac{\textrm{Exp}(-\frac{kD}{2}\ln_{2}x)}{\delta^{2k}}.
\]
where $k=x^{\frac{1}{D\ln_{2}x}}$ with large enough $D>0$ depending
on $A$.
\end{prop}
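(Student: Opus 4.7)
The plan is to apply Markov's inequality to the even moment $\mathbb{E}\bigl(Z_{x,K_n}^{2k}\bigr)$ and then reduce this moment to exactly the Euler product controlled by Lemma \ref{Lemma 2.5}. Since $\delta>0$ and $Z_{x,K_n}>\delta$ implies $Z_{x,K_n}^{2k}>\delta^{2k}$, Markov gives $\mathbb{P}(Z_{x,K_n}>\delta)\leq\delta^{-2k}\mathbb{E}(Z_{x,K_n}^{2k})$, so it suffices to prove $\mathbb{E}(Z_{x,K_n}^{2k})\leq\exp\bigl(-\tfrac{kD}{2}\ln_2 x\bigr)$ for $D$ sufficiently large depending on $A$.

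The first step is a moment expansion. Expanding $Z_{x,K_n}^{2k}$ and using independence of the $\pm 1$ Bernoulli variables $\{f(\mathfrak{p})\}$ across distinct prime ideals, one obtains the identity $\mathbb{E}[f(\mathfrak{a}_1)\cdots f(\mathfrak{a}_{2k})] = \mathbf{1}[\mathfrak{a}_1\cdots\mathfrak{a}_{2k}\text{ is a perfect square ideal}]$. Setting $\mathfrak{b}^2=\mathfrak{a}_1\cdots\mathfrak{a}_{2k}$ and noting that each $N(\mathfrak{a}_i)>x$ forces $N(\mathfrak{b})>x^k$, this yields
\[
\mathbb{E}(Z_{x,K_n}^{2k}) \leq \sum_{\substack{N(\mathfrak{b})>x^k\\ P(\mathfrak{b})\leq x}} \frac{d_{2k}(\mathfrak{b}^2)}{N(\mathfrak{b})^2}.
\]
Applying the Rankin trick with $\sigma=2-D\ln_2 x/\ln x$ removes the lower norm constraint at the cost of a factor $x^{-k(2-\sigma)}=\exp(-kD\ln_2 x)$, and the resulting unconditional sum factors as
\[
\prod_{N(\mathfrak{p})\leq x}\sum_{e\geq 0}\frac{d_{2k}(\mathfrak{p}^{2e})}{N(\mathfrak{p})^{e\sigma}} = \prod_{N(\mathfrak{p})\leq x}\frac{(1-N(\mathfrak{p})^{-\sigma/2})^{-2k}+(1+N(\mathfrak{p})^{-\sigma/2})^{-2k}}{2},
\]
where the equality uses the generating-function identity $\sum_{e\geq 0}\binom{2e+2k-1}{2k-1}t^{e}=\tfrac{1}{2}[(1-\sqrt{t})^{-2k}+(1+\sqrt{t})^{-2k}]$ with $t=N(\mathfrak{p})^{-\sigma}$.

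The Euler product on the right is precisely $Z_1 Z_2 Z_3$ appearing in Lemma \ref{Lemma 2.5}, so that lemma bounds it by $\exp(C(A)k\ln_2 x)$ for some positive constant $C(A)$. Combining with the Rankin factor gives $\mathbb{E}(Z_{x,K_n}^{2k})\leq\exp((C(A)-D)k\ln_2 x)$; choosing $D\geq 2C(A)$ produces the desired bound $\exp(-\tfrac{kD}{2}\ln_2 x)$, and Markov's inequality finishes the proof.

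The main obstacle is the ideal-theoretic moment expansion: one must verify that the $\pm 1$ independence of $f$ at distinct prime ideals translates to the square-ideal characterization of non-vanishing contributions, and recognize the combinatorial identity for $d_{2k}$ on prime-power ideals that turns the divisor sum into exactly the Euler product form of Lemma \ref{Lemma 2.5}. Once this identification is made, the three-way split of that lemma -- according to whether a prime ideal lies over a rational prime that splits completely, ramifies, or has inertia degree $>1$ in $K_n$ -- absorbs all the cyclotomic complications, and the hypothesis $n<(\ln x)^A$ is precisely what keeps the contributions from ramified and inert primes subdominant.
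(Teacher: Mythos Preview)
Your proposal is correct and follows essentially the same approach as the paper: expand the $2k$-th moment, use the $\pm 1$ independence to reduce to square ideals $\mathfrak{b}^2$ with $N(\mathfrak{b})>x^k$, apply Rankin's trick with $\sigma=2-D\ln_2 x/\ln x$, factor the resulting sum as the Euler product $Z_1Z_2Z_3$ of Lemma~\ref{Lemma 2.5}, and finish with Markov. Your write-up is in fact slightly more explicit than the paper's, since you spell out the generating-function identity $\sum_{e\geq 0}d_{2k}(\mathfrak{p}^{2e})t^e=\tfrac12\bigl[(1-\sqrt{t})^{-2k}+(1+\sqrt{t})^{-2k}\bigr]$ that the paper uses without comment.
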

\begin{proof}
Let 
\[
\sigma=2-\frac{D\ln_{2}x}{\ln x}.
\]
 We assume that $x$ is sufficiently large so that $\sigma>1.99.$

Let us first estimate the expectation of the random variable $Z_{x,K_{n}}^{2k}.$
Note that

\begin{align*}
\mathbb{E}(Z_{x,K_{n}}^{2k}) & =\sum_{\substack{N(\mathfrak{a_{\mathit{\textrm{i}}}})>x\\
\mathfrak{a_{\mathit{\textrm{1}}}\cdots\mathfrak{a_{\mathit{2k}}}=\mathfrak{b^{\mathit{2}}}}\\
P(\mathfrak{a_{\mathit{\textrm{i}}}})\leq x
}
}\frac{1}{N(\mathfrak{b})^{2}}<\sum_{\substack{N(\mathfrak{b})>x^{k}\\
P(\mathfrak{b})\leq x
}
}\frac{d_{2k}(\mathfrak{b}^{2})}{N(\mathfrak{b})^{2}}\\
 & <\frac{1}{x^{k(2-\sigma)}}\sum_{\substack{P(\mathfrak{b})\leq x}
}\frac{d_{2k}(\mathfrak{b})}{N(\mathfrak{b})^{\sigma}}\\
 & =\frac{1}{x^{k(2-\sigma)}}\prod_{N\mathfrak{(\mathfrak{p})}\leq x}\left(\frac{(1-\frac{1}{N(\mathfrak{p})^{\sigma/2}})^{-2k}+(1+\frac{1}{N(\mathfrak{p})^{\sigma/2}})^{-2k}}{2}\right),
\end{align*}
where $P(\mathfrak{a})$ denotes the maximal norms of the prime ideals
in all of the prime ideal decompositions of $\mathfrak{a}$ and 
\[
d_{k}(\mathfrak{a}):=\sum_{\mathfrak{a}=\mathfrak{b}_{1}\mathfrak{b}_{2}\cdots\mathfrak{b}_{k}}1.
\]
Using Lemma \ref{Lemma 2.1}, we obtain

\[
\prod_{N\mathfrak{(\mathfrak{p})}\leq x}\left(\frac{(1-\frac{1}{N(\mathfrak{p})^{\sigma/2}})^{-2k}+(1+\frac{1}{N(\mathfrak{p})^{\sigma/2}})^{-2k}}{2}\right)=Z_{1}Z_{2}Z_{3}
\]

Then we apply Lemma \ref{Lemma 2.5} to get that

\[
\mathbb{E}(Z_{x,K_{n}}^{2k})<\exp\left(-kD\ln_{2}x+(C_{1}(A)+C_{2}(A)+C_{3}(A))k\ln_{2}x\right)
\]
Choose $D$ large enough such that $C_{1}(A)+C_{2}(A)+C_{3}(A)<D/2.$
Then

\[
\mathbb{E}(Z_{x,K_{n}}^{2k})<\exp\left(-\frac{kD}{2}\ln_{2}x\right)
\]
By Markov's inequality, we get

\begin{align*}
\mathbb{P}(Z_{x,K_{n}} & >\delta)=P(Y_{x,2}^{2k}>\delta^{2k})<\frac{\mathbb{E}(Z_{x,K_{n}}^{2k})}{\delta^{2k}}<\frac{\textrm{Exp}(-\frac{kD}{2}\ln_{2}x)}{\delta^{2k}}.
\end{align*}
This concludes the proof.
\end{proof}
\begin{cor}
\label{c4.2} For $\delta=\left(\frac{\ln_{2}x}{\ln x}\right)^{2C(A)},$
we have

\[
\mathbb{P}(Z_{x,K_{n}}>\delta)<\exp\left(-C(A)\exp\left(\frac{\ln x}{10C(A)\ln_{2}x}\right)\ln_{2}x\right).
\]
\end{cor}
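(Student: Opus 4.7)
The plan is to deduce Corollary \ref{c4.2} by direct substitution into the bound of Proposition \ref{Proposition 3.2}, exactly parallel to the way Corollary \ref{C4.1} was obtained from Proposition \ref{p3.1}. First I would fix $D$ to be a constant large enough both to justify the application of Proposition \ref{Proposition 3.2} (so that the condition $C_1(A)+C_2(A)+C_3(A)<D/2$ from Lemma \ref{Lemma 2.5} holds) and to ensure $D/2 - 4C(A) \geq C(A)$, i.e.\ to allow $D \geq 10\,C(A)$; then I would take $k = x^{1/(D\ln_{2}x)} = \exp(\ln x/(D\ln_{2}x))$, which is the value of $k$ prescribed in Proposition \ref{Proposition 3.2}.

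Next I would plug $\delta = (\ln_{2}x/\ln x)^{2C(A)}$ into the bound
\[
\mathbb{P}(Z_{x,K_{n}} > \delta) < \delta^{-2k}\exp\bigl(-\tfrac{kD}{2}\ln_{2}x\bigr)
\]
and take logarithms. Since $\log(\delta^{-2k}) = 4kC(A)\log(\ln x/\ln_{2}x) = 4kC(A)(\ln_{2}x - \ln_{3}x)$, the logarithm of the probability bound becomes
\[
\log \mathbb{P}(Z_{x,K_{n}} > \delta) < -\tfrac{kD}{2}\ln_{2}x + 4kC(A)(\ln_{2}x - \ln_{3}x) \leq -\bigl(\tfrac{D}{2}-4C(A)\bigr)k\ln_{2}x.
\]
With the choice $D = 10\,C(A)$ this is at most $-C(A)\,k\ln_{2}x = -C(A)\ln_{2}x\,\exp(\ln x/(10\,C(A)\ln_{2}x))$, which is exactly the claimed bound.

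The only subtlety lies in the bookkeeping of constants: $D$ must exceed the threshold required by Proposition \ref{Proposition 3.2} (which is governed by the constants $C_{j}(A)$ of Lemma \ref{Lemma 2.5}) while at the same time serving as the denominator $10\,C(A)$ appearing in the exponent of the final bound. This is easily accommodated by enlarging $C(A)$ at the outset so that $10\,C(A)$ exceeds the threshold. Since all the substantive analytic work—the Markov inequality applied to $Z_{x,K_{n}}^{2k}$ together with the Euler product estimate via Lemma \ref{Lemma 2.5}—has already been carried out in Proposition \ref{Proposition 3.2}, there is no genuine obstacle here; the corollary is essentially a one-line calculation once the constants are aligned, and the main task in writing it up is simply to make the balance between $-\tfrac{kD}{2}\ln_{2}x$ and $4kC(A)(\ln_{2}x-\ln_{3}x)$ explicit.
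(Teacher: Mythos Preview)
Your proposal is correct and follows essentially the same route as the paper: apply Proposition \ref{Proposition 3.2} with $k=x^{1/(D\ln_{2}x)}$, substitute $\delta=(\ln_{2}x/\ln x)^{2C(A)}$, take logarithms, and choose $D=10\,C(A)$ so that $-\tfrac{D}{2}+4C(A)=-C(A)$. Your discussion of the constant bookkeeping (enlarging $C(A)$ so that $10\,C(A)$ clears the threshold in Proposition \ref{Proposition 3.2}) is in fact more explicit than the paper's own treatment.
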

\begin{proof}
For $\delta=\left(\frac{\ln_{2}x}{\ln x}\right)^{2C(A)},$ we, by
Proposition \ref{Proposition 3.2}, get

\[
\mathbb{P}(Z_{x,K_{n}}<\delta)<\frac{\exp(-\frac{kD}{2}\ln_{2}x)}{\delta^{k}}<\exp\left(-\frac{kD}{2}\ln_{2}x-4kC(A)\ln_{3}x+4kC(A)\ln_{2}x\right).
\]
If we take $D=10C(A),$ then for $x$ large enough we obtain

\[
\mathbb{P}(Z_{x,K_{n}}<\delta)<\exp\left(-C(A)\ln_{2}x\exp\left(\frac{\ln x}{10C(A)\ln_{2}x}\right)\right).
\]
As desired.
\end{proof}
We are now in a position to prove Theorem \ref{Theorem 1.1}.

\noindent{\it Proof of Theorem \ref{Theorem 1.1}.} It is easy to
see that 
\[
S_{x,K_{n}}=Y_{x,K_{n}}-Z_{x,K_{n}}.
\]
 If we take 
\[
\delta=\left(\frac{\ln_{2}x}{\ln x}\right)^{2C(A)},
\]
 then, by Corollaries \ref{C4.1} and \ref{c4.2},

\[
\mathbb{P}(Y_{x,K_{n}}<\delta)<\exp\left(-\frac{C(A)}{2}\ln_{2}x\exp\left(\frac{\ln x}{\ln_{2}x}\right)\right)
\]
and 

\[
\mathbb{P}(Z_{x,K_{n}}>\delta)<\exp\left(-C(A)\ln_{2}x\exp\left(\frac{\ln x}{10C(A)\ln_{2}x}\right)\right).
\]

Using the basic probability inequality

\[
\mathbb{P}(S_{x,K_{n}}\geq0)\geq\mathbb{P}\left((Y_{x,K_{n}}\geq\delta)\land(Z_{x,K_{n}}\leq\delta)\right)\geq\mathbb{P}(Y_{x,K_{n}}\geq\delta)+\mathbb{P}(Z_{x,K_{n}}\leq\delta)-1,
\]
we obtain that

\[
\mathbb{P}(S_{x,K_{n}}\geq0)>1-\exp\left(-C_{5}(A)\ln_{2}x\exp\left(\frac{\ln x}{C_{4}(A)\ln_{2}x}\right)\right).
\]
This completes the proof of Theorem \ref{Theorem 1.1}. \qed

\section{Proof of Theorem \ref{Theorem 1.3}}

In order to show Theorem \ref{Theorem 1.3}, we first derive the following
two results.
\begin{prop}
\label{Proposition 4.1}Suppose $\varrho(n)$ satisfies the same conditions
as in Theorem \ref{Theorem 1.3}. Define

\[
I_{x,1}:=\prod_{p\leq x}\left(1+\frac{\varrho(p)}{p^{\frac{m+2}{2}}}+\frac{\varrho(p^{2})}{p^{2(\frac{m+2}{2})}}+\cdots\right).
\]
Then, for $\delta>0$ and $k>0,$ we have

\[
\mathbb{P}(I_{x,1}<\delta)<\delta^{k}\exp(O(k\ln_{2}x)).
\]
\end{prop}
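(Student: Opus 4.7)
The plan is to identify the Euler factor at each prime explicitly and then apply Markov's inequality, in parallel with the proof structure of Proposition~\ref{p3.1}. The Mordell-type recurrence
\[
\varrho(p^{k+1}) = \varrho(p)\varrho(p^k) - p^m \varrho(p^{k-1})
\]
yields the generating identity $\sum_{j\geq 0}\varrho(p^j)T^j = (1-\varrho(p)T+p^m T^2)^{-1}$. Substituting $T = p^{-(m+2)/2}$ and using $\varrho(p) = 2p^{m/2}\cos\theta_p$ collapses the denominator to
\[
1 - \frac{2\cos\theta_p}{p} + \frac{1}{p^2} = \left(1-\frac{e^{i\theta_p}}{p}\right)\left(1-\frac{e^{-i\theta_p}}{p}\right),
\]
so that
\[
I_{x,1} = \prod_{p\leq x}\bigl[(1-e^{i\theta_p}/p)(1-e^{-i\theta_p}/p)\bigr]^{-1}.
\]
In particular, each local factor is strictly positive and bounded, so negative moments of $I_{x,1}$ are well defined.

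Next, I would apply Markov's inequality to the positive random variable $I_{x,1}^{-k}$:
\[
\mathbb{P}(I_{x,1}<\delta) = \mathbb{P}(I_{x,1}^{-k}>\delta^{-k}) \leq \delta^k\,\mathbb{E}(I_{x,1}^{-k}).
\]
Since the $\theta_p$ are independent across primes, the expectation factors as
\[
\mathbb{E}(I_{x,1}^{-k}) = \prod_{p\leq x}\mathbb{E}\!\left[\left(1-\frac{2\cos\theta_p}{p}+\frac{1}{p^2}\right)^k\right],
\]
reducing the problem to a uniform bound on each local factor.

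Finally, I would use the pointwise inequality $1-2\cos\theta/p + 1/p^2 \leq (1+1/p)^2$ (valid since $-\cos\theta\leq 1$) to estimate each local expectation by $(1+1/p)^{2k}$, and then invoke Mertens' theorem in the form $\sum_{p\leq x}\log(1+1/p) = \ln_2 x + O(1)$ to obtain
\[
\prod_{p\leq x}(1+1/p)^{2k} = \exp(O(k\ln_2 x)).
\]
Combining this with the Markov step yields the claimed bound. I do not anticipate a genuine obstacle: the only nontrivial maneuver is the factorization of the Euler factor into $[(1-e^{i\theta_p}/p)(1-e^{-i\theta_p}/p)]^{-1}$, after which the analysis mirrors the $k$-th moment bound used for $Y_{x,K_n}$ in Proposition~\ref{p3.1}. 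The pointwise estimate is wasteful in that it ignores the Sato--Tate vanishing $\mathbb{E}[\cos\theta_p] = 0$, but this slack is harmless since the target bound already permits a factor $\exp(O(k\ln_2 x))$.
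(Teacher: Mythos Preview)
Your proposal is correct and follows essentially the same route as the paper: both identify the local factor as $\bigl(1-2\cos\theta_p/p+1/p^2\bigr)^{-1}$, bound it pointwise by $(1+1/p)^{-2}$, take the $k$th negative moment via independence and Mertens to get $\exp(O(k\ln_2 x))$, and finish with Markov's inequality. The only cosmetic difference is that you spell out the generating-function derivation and the factorization $(1-e^{i\theta_p}/p)(1-e^{-i\theta_p}/p)$ explicitly, whereas the paper states the closed form for $I_{x,1}$ directly.
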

\begin{proof}
It is easy to see that

\[
I_{x,1}=\prod_{p\leq x}\left(\frac{1}{1-\frac{2\cos\theta_{p}}{p}+\frac{1}{p^{2}}}\right).
\]
Thus, by applying inequality 
\[
\left|1-\frac{2\cos\theta_{p}}{p}+\frac{1}{p^{2}}\right|<(1+\frac{1}{p})^{2},
\]
 we get

\begin{align*}
\mathbb{E}(I_{x,1}^{-k}) & =\prod_{p\leq x}\frac{2}{\pi}\int_{0}^{\pi}\left(1-\frac{2\cos\theta_{p}}{p}+\frac{1}{p^{2}}\right)^{k}\sin^{2}\theta d\theta\\
 & <\prod_{p\leq x}\left(1+\frac{1}{p}\right)^{2k}\\
 & =\exp(O(k\ln_{2}x)).
\end{align*}
Applying Markov's inequality, we obtain

\[
\mathbb{P}(I_{x,1}<\delta)=P(I_{x,1}^{-k}>\delta^{-k})<\frac{\mathbb{E}(I_{x,1}^{-k})}{\delta^{-k}}<\delta^{k}\exp(O(k\ln_{2}x)).
\]
This finishes the proof.
\end{proof}
\begin{prop}
\label{Proposition 4.2} Suppose $\varrho(n)$ satisfies the same
conditions as in Theorem \ref{Theorem 1.3}. Define

\[
I_{x,2}:=\sum_{\substack{n>x\\
P(n)\leq x
}
}\frac{\varrho(n)}{n^{\frac{m+2}{2}}},
\]
where $P(n)$ denotes the greatest prime divisor of $n.$ Then, for
any $\delta>0$ and $k=x^{\frac{1}{D\ln_{2}x}},$ we have

\[
\mathbb{P}(I_{x,2}<\delta)<\frac{\exp(-\frac{D}{2}k\ln_{2}x)}{\delta^{2k}},
\]
where $D>0$ is a large constant.
\end{prop}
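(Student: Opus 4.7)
The plan is to mirror the structure of the proof of Proposition \ref{Proposition 3.2}, with the Sato--Tate orthogonality of Chebyshev polynomials playing the role that Rademacher cancellation played in the Bernoulli setting. Concretely, I will bound the $2k$-th moment $\mathbb{E}(I_{x,2}^{2k})$ and then apply Markov's inequality.

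The first ingredient is the identification of $\varrho(p^{k})$ with a Chebyshev polynomial: from the recurrence and $\varrho(p)=2p^{m/2}\cos\theta_{p}$, an easy induction gives $\varrho(p^{k})=p^{km/2}U_{k}(\cos\theta_{p})$, where $U_{k}$ is the Chebyshev polynomial of the second kind, so that $|U_{k}(\cos\theta)|\leq k+1$. A direct computation of the integral against $\tfrac{2}{\pi}\sin^{2}\theta\,d\theta$ yields the orthogonality
\[
\mathbb{E}\bigl[U_{a}(\cos\theta_{p})\,U_{b}(\cos\theta_{p})\bigr]=\delta_{a,b},
\]
and in particular $\mathbb{E}[\prod_{i}U_{a_{i}}(\cos\theta_{p})]=0$ whenever $\sum_{i}a_{i}$ is odd (since iterating the Clebsch--Gordan identity $U_{a}U_{b}=U_{a+b}+U_{a+b-2}+\cdots+U_{|a-b|}$ expresses the product as a sum of $U_{j}$'s with $j\equiv\sum a_{i}\pmod{2}$). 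This is the source of cancellation I need.

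Expanding $I_{x,2}^{2k}$, using independence of $\theta_{p}$ across primes, and writing $N:=\prod_{i}n_{i}$ and $E(p):=\sum_{i}v_{p}(n_{i})$, one obtains
\[
\mathbb{E}(I_{x,2}^{2k})=\sum_{\substack{n_{1},\ldots,n_{2k}>x\\ P(n_{i})\leq x}}\frac{1}{\prod_{i}n_{i}^{(m+2)/2}}\prod_{p}p^{mE(p)/2}\,\mathbb{E}\Bigl[\prod_{i}U_{v_{p}(n_{i})}(\cos\theta_{p})\Bigr].
\]
The parity observation forces every $E(p)$ to be even on the surviving terms, so $N=M^{2}$ with $M>x^{k}$ (since each $n_{i}>x$). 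Using the crude bound $|\mathbb{E}\prod_{i}U_{v_{p}(n_{i})}|\leq\prod_{i}(v_{p}(n_{i})+1)$ and the identity $\sum_{n_{1}\cdots n_{2k}=N}\prod_{i}d(n_{i})=d_{4k}(N)$ (an ordered factorization of $N$ into $2k$ parts together with a divisor of each part is the same as an ordered factorization into $4k$ parts), I arrive at
\[
\mathbb{E}(I_{x,2}^{2k})\leq\sum_{\substack{M>x^{k}\\ P(M)\leq x}}\frac{d_{4k}(M^{2})}{M^{2}}.
\]

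This is exactly the shape of sum estimated in Lemma \ref{Lemma 2.4} (with $q=1$ and $2k$ replaced by $4k$). Rerunning the argument there with the corresponding constant adjustment gives $\ll\exp(-\tfrac{D}{2}k\ln_{2}x)$ provided $D$ is taken sufficiently large, and then Markov's inequality $\mathbb{P}(|I_{x,2}|>\delta)\leq\mathbb{E}(I_{x,2}^{2k})/\delta^{2k}$ yields the claimed bound. The main obstacle is the orthogonality-to-square reduction in the third paragraph: it is the Sato--Tate analogue of the Bernoulli identity $\mathbb{E}[f(\mathfrak{a}_{1})\cdots f(\mathfrak{a}_{2k})]=\mathbf{1}[\prod_{i}\mathfrak{a}_{i}\text{ is a square}]$ used in Proposition \ref{Proposition 3.2}, and requires care in treating the multi-variable Clebsch--Gordan expansion; the appearance of $d_{4k}$ in place of $d_{2k}$ is a minor annoyance which is absorbed by enlarging $D$.
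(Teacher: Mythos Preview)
Your argument is correct and matches the paper's approach almost exactly: both reduce to perfect-square products via the parity of the Chebyshev polynomials $U_{k}$ under the Sato--Tate measure, bound the resulting expectation pointwise by $\prod_{i}(v_{p,i}+1)$, and appeal to Lemma \ref{Lemma 2.4} followed by Markov's inequality. The only inessential difference is how $\prod_{i}(v_{p,i}+1)$ is packaged: the paper bounds it by $2^{\Omega(u)}$ and invokes Lemma \ref{Lemma 2.4} with $q=2$, whereas you absorb it into the factorization count via $\sum_{n_{1}\cdots n_{2k}=M^{2}}\prod_{i}d(n_{i})=d_{4k}(M^{2})$ and invoke the lemma with $q=1$ and $4k$ in place of $2k$.
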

\begin{proof}
We first give an upper bound for the expectation of the random variable
$I_{x,2}^{2k}.$

When $k$ is odd, $\varrho(p^{k})$ is an odd polynomial in $\varrho(p),$
and when $k$ is even, $\varrho(p^{k})$ is an even polynomial in
$\varrho(p).$ This means that $\mathbb{E}\left(\varrho(n_{1})\cdots\varrho(n_{2k})\right)=0,$
unless the product $n_{1}\cdots n_{2k}$ is a square. Thus

\begin{align*}
\mathbb{E}(I_{x,2}^{2k}) & =\sum_{\substack{n_{1}\cdot n_{2}\cdot\cdots\cdot n_{2k}\in\mathbb{Z}^{2}\\
n_{i}>x\\
P(n_{i})\leq x
}
}\frac{\mathbb{E}\left(\varrho(n_{1})\cdots\varrho(n_{2k})\right)}{(n_{1}\cdots n_{2k})^{\frac{m+2}{2}}}.
\end{align*}

Now we suppose that $n_{1}\cdot n_{2}\cdot\cdots\cdot n_{2k}=u^{2},$
then $u>x^{k}.$ For $i=1,\cdots,2k,p\leq x,$ we define

\[
v_{p,i}:=\textrm{ord}_{p}(n_{i}),\quad u_{p}:=\textrm{ord}_{p}(u),
\]
where $\textrm{ord}_{p}$ is the $p$-adic valuation. Then 
\[
v_{p,1}+\cdots+v_{p,2k}=2u_{p}.
\]
For the expectation, we have

\begin{align*}
\mathbb{E}\left(\prod_{i=1}^{2k}\varrho(n_{i})\right) & =\prod_{p}\mathbb{E}\left(\prod_{i=1}^{2k}\varrho(p^{v_{p,i}})\right)\\
 & =\prod_{p}\left[p^{mu_{p}}\times\frac{2}{\pi}\int_{0}^{\pi}\left(\prod_{i=1}^{2k}\frac{\sin(v_{p,i}+1)\theta_{p}}{\sin\theta_{p}}\right)\sin^{2}\theta_{p}d\theta_{p}\right]\\
 & =u^{m}\prod_{p}\left[\frac{2}{\pi}\int_{0}^{\pi}\left(\prod_{i=1}^{2k}\frac{\sin(v_{p,i}+1)\theta_{p}}{\sin\theta_{p}}\right)\sin^{2}\theta_{p}d\theta_{p}\right].
\end{align*}

Using the elementary inequality: 
\[
\left|\frac{\sin(v_{p,i}+1)\theta_{p}}{\sin\theta_{p}}\right|\leq v_{p,i}+1
\]
 for $i=1,\cdots,2k$ and $p\leq x,$ we obtain

\[
\left|\int_{0}^{\pi}\left(\prod_{i=1}^{2k}\frac{\sin(v_{p,i}+1)\theta_{p}}{\sin\theta_{p}}\right)\sin^{2}\theta_{p}d\theta_{p}\right|<\prod_{i=1}^{2k}(v_{p,i}+1)\leq2^{u_{p}}.
\]
From this we deduce that

\[
\mathbb{E}\left(\prod_{i=1}^{2k}\varrho(n_{i})\right)<u^{m}\prod_{p}2^{u_{p}}=u^{m}2^{\Omega(u)},
\]
where $\Omega(u)$ represents the number of prime factors of the integer
$u$ (counting multiplicities). Then

\begin{equation}
\mathbb{E}(I_{x,2}^{2k})<\sum_{\substack{n_{1}\cdot n_{2}\cdot\cdots\cdot n_{2k}=u^{2}\\
n_{i}>x\\
P(n_{i})\leq x
}
}\frac{u^{m}2^{\Omega(u)}}{u^{m+2}}<\sum_{\substack{u>x^{k}\\
P^{+}(u)\leq x
}
}\frac{2^{\Omega(u)}}{u^{2}}\label{eq:4-2}
\end{equation}

From Lemma \ref{Lemma 2.4}, we get 

\[
\mathbb{E}(I_{x,2}^{2k})<\frac{1}{x^{k(2-\sigma)}}\exp(kO(\ln_{2}x)).
\]
Applying Markov's inequality, we get

\[
P(I_{x,2}<\delta)=P(I_{x,1}^{2k}<\delta^{2k})<\frac{\exp(O(k\ln_{2}x))}{x^{k(2-\sigma)}\delta^{2k}}.
\]
From this, we can easily obtain the desired upper bound for the probability.
\end{proof}
We are now ready to show Theorem \ref{Theorem 1.3}.

\noindent{\it Proof of Theorem \ref{Theorem 1.3}.} It is easy to
see that $I_{x}^{(m)}=I_{x,1}-I_{x,2}.$ Take $k=\exp(\frac{\ln x}{D\ln_{2}x}),\delta=\left(\frac{\ln_{2}x}{\ln x}\right)^{2C},$
where $C$ and $D$ are large enough. Then, by applying Propositions
\ref{Proposition 4.1} and \ref{Proposition 4.2}, we have

\[
\mathbb{P}(I_{x,1}<\delta)<\exp\left(-\frac{C}{2}\ln_{2}x\exp\left(\frac{\ln x}{D\ln_{2}x}\right)\right)
\]
and 

\[
\mathbb{P}(I_{x,2}>\delta)<\exp\left(-\frac{D}{5}\ln_{2}x\exp\left(\frac{\ln x}{D\ln_{2}x}\right)\right).
\]

Using the basic probability inequality:

\[
\mathbb{P}(I_{x}^{(m)}\geq0)\geq\mathbb{P}\left((I_{x,1}\geq\delta)\land(I_{x,2}\leq\delta)\right)\geq\mathbb{P}(I_{x,1}\geq\delta)+\mathbb{P}(I_{x,2}\leq\delta)-1
\]
we get

\[
\mathbb{P}(I_{x}^{(m)}\geq0)>1-O\left(\exp\left(-\exp\left(\frac{\ln x}{D_{1}\ln_{2}x}\right)\right)\right)
\]
for some large positive constant $D_{1}.$ This completes the proof
of Theorem \ref{Theorem 1.3}. \qed

\end{document}